\newcommand{\ds}{\displaystyle}
\newtheorem{thm}{Theorem}[section]
\newtheorem{prop}[thm]{Proposition}
\newtheorem{lem}[thm]{Lemma}
\newtheorem{exmp}{Example}[section]
\newtheorem{remark}{Remark}[section]
\begin{document}

\title{Stability and bifurcations in  Wilson-Cowan systems with distributed delays, and an application to basal ganglia interactions}

\author{Eva Kaslik$^{1}$ \and Emanuel-Attila Kokovics$^1$ \and
Anca R\u adulescu$^{2,*}$
}

\date{\noindent $^1$ West University of Timi\c{s}oara, Bd.  V. P\^{a}rvan nr. 4, 300223, Timi\c{s}oara, Romania\\
\noindent $^2$ State University of New York at New Paltz, New Paltz, NY 12561, USA\\ 
$^*$ Corresponding Author: {\it radulesa@newpaltz.edu}}

\maketitle

\begin{abstract}

The traditional Wilson-Cowan model of excitatory and inhibitory meanfield interactions in neuronal populations considers a weak Gamma distribution of time delays when processing inputs, and is obtained via a time-coarse graining technique that averages the population response. { Previous analyses of the stability of the Wilson-Cowan model focused on more simplified cases, where the delays were either not present, constant or were of a specific type}.  Since these simplifications may significantly alter the behavior of the model, we focus on understanding the behavior of the system before time-course graining, and for a wider range of delay distributions.

For these generalized delay equations, we perform stability and bifurcation analyses with respect to parameters that capture both the coupling profile, and the time delay. { The investigation is done through the examination of the system's associated characteristic equation. Under mild assumptions, we give complete mathematical proofs of our theoretical results, for the model with general delay distributions and prove the transversality condition for the possible Hopf bifurcations, in a generalized context}. The stability region in this parameter space is described theoretically for several types of delay kernels, and numerical simulations are presented to substantiate the theoretical results. 
   
We found that the stability regions and bifurcations differ significantly between different types of delay distributions: weak Gamma distributions promote stable firing rates, while strong Gamma distributions are associated with regular { oscillations}, and Dirac distributions appear to facilitate more complex {, aperiodic} patterns. This supports the unexplored possibility of different delay distributions being used as the substrate for different functional behaviors, and emphasizes the importance of a careful choice of the delay kernel in the mathematical model.
    
We illustrate these theoretical principles in an application to a basal ganglia circuit, in which $\beta$-band { oscillations} have been associated with Parkinson's Disease.

\end{abstract}

\section{Introduction}

\subsection{Modeling background}

Computational modeling of neuronal behavior covers a large range of spatio-temporal scales, from detailed, membrane potential based models of single spiking neurons, to broad network models of interacting brain regions. At the lower scale, typically associated with electrode recordings from single cell in vitro preparations, modeling difficulties often arise from the inherent complexity and high dimensionality associated with considering detailed molecular mechanisms that govern ionic currents and spiking activity in a single cell. The Hodgkin-Huxley model was in its original form limited to the two voltage-dependent currents found in the squid giant axon, but it has to be extended to dozens of equations per neuron if it includes other ion channels involved in neuronal excitability~\cite{meunier2002playing,rinzel1990discussion}. In the context of studying behavior in functional neuronal networks, which may involve thousands of neurons, the dimensionality of a network formed of single cells may become an obstacle to computational feasibility. At the opposite end, often associated with functional imaging data, modeling activity within entire brain regions as a whole lacks specificity, and prevents clear interpretations of what ``activity'' of a state variable may actually represent~\cite{vertes2012simple}.

The middle range between the two ends consists of a rich variety of models. One wide-spread possibility has been creating reduced models as modifications of Hodgkin-Huxley equations, by modeling the behavior of multiple ion channels into one comprehensive variable~\cite{golomb1994clustering}. Another practical option has been using state variables to characterize the meanfield spiking activity in a population of cells~\cite{bick2020}. This type of model is still able to incorporate information on spiking mechanisms, and efficiently illustrates resulting firing patterns by using only one variable per population.

One of the most historically significant and best understood meanfield models, the Wilson-Cowan model \cite{Wilson-Cowan} is perhaps the most popular. The model, derived in 1972, describes the localized interactions in a pair of excitatory and inhibitory neuronal populations. At each time instant $t$, the proportions of excitatory and inhibitory cells firing per unit of time are captured by the two state variables $E(t)$ and $I(t)$. The original model considers the effect that an external input $P$ has on the E/I system, based not only on the coupling strengths between the two units, but also on the history of firing in each. More specifically,  $E(t+{\tau_1})$ and $I(t+{\tau_2})$  represent the proportion of cells which are sensitive (i.e. not refractory) and which also receive at least threshold excitation at the moment of time $t$. This leads to the following system of integral equations:
\begin{equation}
\left\{\begin{array}{l}
\ds E(t+{\tau_1})=\left(1-\int_{t-r}^tE(s)ds\right)\cdot\mathcal{S}_e\left[\int_{-\infty}^t h(t-s)\left(c_1E(s)-c_2I(s)+P_e(s)\right)ds\right]\\
\ds I(t+{\tau_2})=\left(1-\int_{t-r'}^tI(s)ds\right)\cdot\mathcal{S}_i\left[\int_{-\infty}^t h(t-s)\left(c_3E(s)-c_4I(s)+P_i(s)\right)ds\right]
\end{array}\right.
\label{unsimplified}
\end{equation}
Here, the first factor on each right hand side represents the proportion of sensitive excitatory / inhibitory cells, where $r$ and $r'$ are the absolute refractory periods (msc).  The functions $\mathcal{S}_e$, $\mathcal{S}_i$ are sigmoid threshold functions, their arguments denoting the mean field level of excitation / inhibition generated in an excitatory /inhibitory cell at time $t$.  The summation coefficients $c_i>0$ are connectivity weights representing the average number of excitatory / inhibitory synapses per cell and $P_e,P_i$ denote external inputs. It is additionally assumed that  a cell's weighted summation of inputs is modulated longitudinally by a decreasing kernel $h(t)$. The original paper \cite{Wilson-Cowan} suggests the use of a weak Gamma kernel $h(t)$, before proceeding with further mathematical simplifications, for convenience of the analysis. 

 Indeed, with the additional simplifying assumption of negligible refractory periods ($r=r'=0$) and with a degree one Taylor approximation of the left-hand terms for small ${\tau_1}$ and ${\tau_2}$, the Equations~\eqref{unsimplified} can be easily rewritten as the following system of intergo-differential equations:
\begin{equation}
\left\{\begin{array}{l}
\ds \dot{E}(t)=-{\tau_1} E(t) + \mathcal{S}_e\left[\int_{-\infty}^t h(t-s)\left(c_1E(s)-c_2I(s)+P_e(s)\right)ds\right]\\
\ds \dot{I}(t)=-{\tau_2} I(t) +\mathcal{S}_i\left[\int_{-\infty}^t h(t-s)\left(c_3E(s)-c_4I(s)+P_i(s)\right)ds\right]
\end{array}\right.
\label{simplified1}
\end{equation}

For simplicity, the equations are often considered to have the same time scale ${\tau_1=\tau_2=1}$. The model historically 
used as the Wilson-Cowan model \cite{Wilson-Cowan} was then obtained  from~\eqref{simplified1} by applying further time-coarse graining. This final form, consisting of a system of non-delayed ordinary differential equations, is very convenient and was extensively analyzed and used in the modeling literature~\cite{destexhe2009wilson}. However,  this final course-graining step obscures potentially vital information { on the the shape of the temporal integration of synaptic inputs in both populations (by assuming, for example, that $h(t)$ is close to one for $t<r,r'$ respectively, and that is decays rapidly to zero for $t>r,r'$)}. This information may be crucial to the neural function that the model is aiming to address, { hence a few generalizations of the standard model found it useful to further explore the original equations (prior to coarse graining), imposing loser conditions on temporal integration, such as discrete time-delays, and often considering null refractory periods $r,r'$~\cite{Coombes-2009,visser2012analysis,pasillas2013delay,veltz2013interplay}.}

However, constant delays do not constitute the most biologically realistic possibility. Since $h(t-s)$ represents the effect of presynaptic input at time s on the membrane potential at time t, it can be thought to include two aspects: the synaptic transmission delay, and the time-course of the post-synaptic potential (PSP). In this context, constant delays imply not only equal transmission delays between the neurons, but also instantaneous PSPs. While often delays due to PSP are ignored, and a constant kernel is chosen on the basis on experimentally observed synaptic transmission delays~\cite{holgado2010}, such choices are rather made in the interest of simplicity, and do not incorporate the full picture, and other types of distributed delays may deliver a more realistic model of synaptic transmission delays.

\color{black}

 In this paper, we consider and analyze the Wilson-Cowan model in its original integro-differential form, { for a general delay distribution and { with the only simplifying assumption (retained for simplicity) being that of zero refractory periods $r=r'=0$}. We then compare the results} for several types of delay kernels. We analyze the long-term behavior in each case, then we discuss the main differences between these behaviors, and potential advantages of using delay equations versus the simpler and more main-stream course grain approximation.
 
 We emphasize, that our main theorems have already been briefly presented (without proofs) in \cite{kaslik2020wilson}, accompanied by several numerical simulations, mostly in the case of a discrete time-delay. Thus, in this paper we illustrate rigorous mathematical proofs of our previously presented statements \cite{kaslik2020wilson} and of our additional lemmas and propositions added here. Choosing different parameters similar numerical simulations are presented, as in \cite{kaslik2020wilson,Coombes-2009}. Finally a Parkinson's Disease (PD) model of the basal ganglia is adapted after \cite{holgado2010}, followed by a series of simulations, treating both the case of healthy basal ganglia and a diseased/parkinsonian one, using different delay distributions. We then present detailed neuro-biological interpretations and argumentations of our simulations of the PD model.

\subsection{Our model}

To address the integral terms from the original equations ~\eqref{simplified1}  we analyze in this paper the general model with distributed delays  defined by
\begin{equation}\label{sys.wilson.cowan.dd}
\left\{\begin{array}{l}
\ds \dot{u}(t)=-u(t)+f_1\left[\theta_u+\int\limits_{-\infty}^t h(t-s)\left(au(s)+bv(s)\right)ds\right]\\
\ds \dot{v}(t)=-v(t)+f_2\left[\theta_v+\int\limits_{-\infty}^t h(t-s)\left(cu(s)+dv(s)\right)ds\right]\\
\end{array}\right.
\end{equation}
where $u(t)$ and $v(t)$ represent the firing activity in the two neuronal populations, $a,b,c,d$ are the connection weights and $\theta_u$, $\theta_v$ are background drives. The activation functions $f_1$ and $f_2$  are smooth and increasing on the real line.  The original model~\eqref{simplified1} can be recovered for equal time scales ${\tau_1=\tau_2}$, if the activation functions are particularly set to sigmoidals $f_1 = \mathcal{S}_e$ and $f_2 = \mathcal{S}_i$, with background drives 
$\ds \theta_u = \int_{-\infty}^t h(t-s) P_e(s) ds$ and $\ds \theta_v = \int_{-\infty}^t h(t-s) P_i(s) ds$, respectively.

The delay kernel  $h:[0,\infty)\to[0,\infty)$ in system~\eqref{sys.wilson.cowan.dd} is suggested to have an exponential form (weak Gamma kernel) in the original Wilson-Cowan reference~\cite{Wilson-Cowan}. We will consider the extensive case of a general probability density function, representing the probability that a particular time delay occurs. The delay kernel is considered to be bounded and piecewise continuous, satisfying
\begin{equation}\label{delay.kernel.properties}
\int\limits_0^{\infty}h(s)ds=1,\quad \textrm{with the average time delay} \quad\tau=\int\limits_0^{\infty}sh(s)ds<\infty.
\end{equation}

Our analysis of the system~\eqref{sys.wilson.cowan.dd} will focus around understanding the changes in its long term behavior that are triggered by changes in spatial and temporal summation of inputs (as reflected in the synaptic weights and in the integration kernel).

In recent years, significant work has been dedicated to understanding the effects of spatial integration of inputs~\cite{kaiser2007brain,curto2017can}. An entire direction in computational neuroscience is centered around investigating the effects of connectivity and synaptic weights on dynamics in coupled neural populations~\cite{vyas2020computation}. Seminal empirical work has shown that the synaptic weight profile in a network changes with location~\cite{iyer2013influence}, with task-related circumstances (e.g., when learning and memory formation are involved~\cite{stuchlik2014dynamic}), but also with general circumstances (e.g., sleep and circadian rhythms~\cite{golden2020sleep,parekh2016circadian}). such modeling work is shedding increasingly more light on the mechanisms by which different synaptic landscapes modulate different outcomes in networks of neuronal populations.

Comparatively, less is known about temporal profiles in the integration of inputs. While physiological evidence supports that exponential decay may represent an appropriate model for certain type of neural contexts~\cite{rahman2018exponentially}, other modulation patterns have been explored~\cite{meyer2017models,chow2020before}. It is generally agreed that effects of stimulation decay with the time course, but it is quite possible that the timeline and profile of this decay { (the shape of the post-synaptic integration) depend on the properties of the neurons involved (receptor types, voltage-dependent currents in the soma and dendrites)~\cite{byrne2014postsynaptic}.}

The Wilson-Cowan type system~\eqref{sys.wilson.cowan.dd} simplifies the behavior of a self-interacting E/I network into a mean-field, two-dimensional model without the geometric, spatial structure described in other, more detailed models. The distribution of synaptic weights contributing to spatial summation of inputs is then entirely encapsulated in the set of mean field coefficients $a,b,c,d$. The temporal summation of inputs is described by the kernel $h$. In this paper, we study, for different types of kernels, the types of asymptotic regimes accessible to the system. For each kernel type, we further study the conditions on the synaptic coefficients and distributed delay that produce transitions between these regimes. 

The specific case of discrete time delays (Dirac kernels) has been previously analyzed in
{\cite{Coombes-2009, pavlides2012}}. However, { as previously mentioned,} the discrete time integration model does not have a strong physiological backup; other important types of delay kernels are frequently used in the literature, such as uniform distribution kernels { \cite{Campbell_2009, Jessop_2010,Bernard_2001}} or Gamma kernels
(recall that the original Wilson-Cowan model  itself proposed a weak Gamma { (exponential)} kernel $h(t)=\tau^{-1}\exp(-t/\tau)$) \cite{Wilson-Cowan}. Analyzing  and comparing mathematical models which incorporate  different types of delay kernels (e.g. weak Gamma kernel or strong Gamma kernel $h(t)=4\tau^{-2}t\exp(-2t/\tau)$) may shed a light on the difference between using different distributed delays, as well as on the differences between using continuous versus discrete delays on the system's dynamics. Furthermore, when modeling natural systems, one usually does not have access to the exact delay distribution, and approaches using general kernels may prove to be more revealing \cite{Adimy_2005,Bernard_2001,Campbell_2009,Diekmann_2012,Faria_2008,Jessop_2010,Ozbay_2008,Ruan_1996,Yuan_2011}. 

The results in Section~\ref{main_results} are delivered within this general context, keeping additional conditions on the kernels to a minimum. In Sections~\ref{application1} and~\ref{application2}, we further illustrate the behaviors for specific theoretical and biologically driven examples, using particular kernels (the weak and strong Gamma kernels and the Dirac kernel).  In Section~\ref{discussion}, we draw comparisons between the behaviors reported for different types of distributed delays, and discuss potential contributions of the delay scheme to dynamic control mechanisms in coupled neuronal populations. 

\begin{figure}[http]
\centering
\begin{minipage}[c]{0.48\linewidth}
	\centering
	\includegraphics[width=\linewidth]{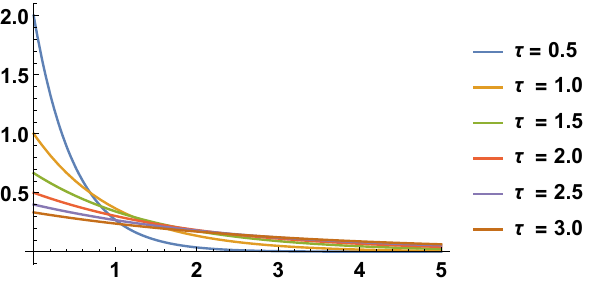}
\end{minipage}
\hspace*{0.02\linewidth}
\begin{minipage}[c]{0.48\linewidth}
\centering
  	\includegraphics[width=\linewidth]{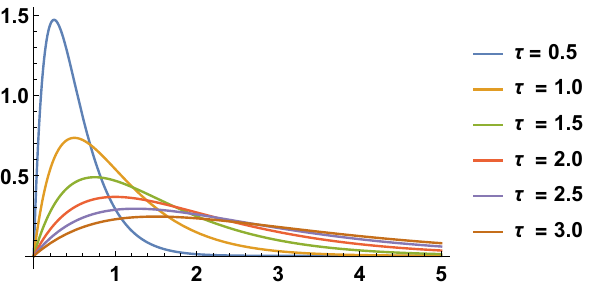}
\end{minipage}
    \caption{Weak and Strong Gamma kernels for different values of $\tau$.}
    \label{fig:gamma1.2}
\end{figure}


\section{Stability and bifurcation results}
\label{main_results}

The initial conditions that are associated to system (\ref{sys.wilson.cowan.dd}) are:
$$u(s)=\psi_1(s),\quad v(s)=\psi_2(s),\quad \forall\, s\in(-\infty,0],$$
where $\psi_1$ and $\psi_2$  belong to the Banach space $C_{0,\mu}(\mathbb{R}_-,\mathbb{R})$ (where $\mu>0$) of continuous real valued functions defined on $(-\infty,0]$ such that $\lim\limits_{t\rightarrow -\infty} e^{\mu t}\psi(t)=0$, endowed with the norm:
$$\|\psi\|_{\infty,\mu}=\sup_{t\in(-\infty,0]}e^{\mu t}|\psi(t)|.$$
For existence and uniqueness results regarding the solution of the initial value problem associated to system \eqref{sys.wilson.cowan.dd} we cite \cite{hale1991introduction,Kolmanovskii1999}. 
{ An important tool in the stability analysis of system \eqref{sys.wilson.cowan.dd} is the \textit{principle of linearized stability} (see for example \cite{smith2011introduction}), characterizing the local stability properties of an equilibrium state of \eqref{sys.wilson.cowan.dd} in terms of the stability of the null solution of the linearized system at that particular equilibrium. For the principle of linearized stability in the context of systems of differential equations with infinite time delay, we refer to \cite{Diekmann_2012}.}

The equilibrium states of system (\ref{sys.wilson.cowan.dd}) are found by solving the following algebraic system:
\begin{equation}\label{sys.ss}
\left\{\begin{array}{l}
\ds u=f_1(\theta_u+au +bv)\\
\ds v=f_2(\theta_v+cu +dv)\\
\end{array}\right.
\end{equation}

Linearizing at an equilibrium state $(u^\star, v^\star)$ leads to
\begin{equation}\label{sys.lin}
\left\{\begin{array}{l}
\ds \dot{u}=-u+\phi_1\int\limits_{-\infty}^t h(t-s)\left(au(s)+bv(s)\right)ds\\
\ds \dot{v}=-v+\phi_2\int\limits_{-\infty}^t h(t-s)\left(cu(s)+dv(s)\right)ds\\
\end{array}\right.
\end{equation}
where $\phi_1=\phi_1(u^\star, v^\star)=f_1'(\theta_u+au^\star+bv^\star)>0$ and $\phi_2=\phi_2(u^\star, v^\star)=f_2'(\theta_v+cu^\star+dv^\star)>0$.

{ Either looking for solutions of the form $(u(t),v(t))=e^{zt}(u(0),v(0))$ or directly} employing the Laplace transform technique to the linearized system (\ref{sys.lin}), we obtain:
\begin{equation}\label{sys.laplace}
\left\{\begin{array}{l}
\ds zU(z)-u(0)=-U(z)+\phi_1H(z)\left(aU(z)+bV(z)\right)\\
\ds zV(z)-v(0)=-V(z)+\phi_2H(z)\left(cU(z)+dV(z)\right)\\
\end{array}\right.
\end{equation}
where $U(z)$, $V(z)$ and $H(z)$ denote the Laplace transforms of state variables $u$ and $v$, and of the delay kernel $h$, respectively.

System (\ref{sys.laplace}) is equivalently written as:
\begin{equation}\label{sys.laplace.matrix}
  \left(
    \begin{array}{cc}
      z+1-a\phi_1H(z) & -b\phi_1 H(z) \\
      -c\phi_2H(z) & z+1-d\phi_2H(z) \\
    \end{array}
  \right)\left(
           \begin{array}{c}
             U(z) \\
             V(z) \\
           \end{array}
         \right)=\left(
                   \begin{array}{c}
                     u(0) \\
                     v(0) \\
                   \end{array}
                 \right)
\end{equation}
which leads to the following characteristic equation (associated to the equilibrium $(u^\star, v^\star)$):
\begin{equation}\label{eq.char}
  \Delta(z)=(z+1)^2-\alpha H(z)(z+1)+\beta H^2(z)=0
\end{equation}
where
\begin{align*}
\alpha&=a\phi_1(u^\star, v^\star)+d \phi_2(u^\star, v^\star)=af_1'(\theta_u+au^\star+bv^\star)+df_2'(\theta_v+cu^\star+dv^\star);\\
\beta&=(ad-bc)\phi_1(u^\star, v^\star)\phi_2(u^\star, v^\star)=(ad-bc)f_1'(\theta_u+au^\star+bv^\star)f_2'(\theta_v+cu^\star+dv^\star).
\end{align*}

\subsection{Delay-independent stability and instability results}

We first discuss several delay-independent stability and instability properties of system (\ref{sys.wilson.cowan.dd}), emphasizing that these results are particularly useful when the exact delay distributions are unknown. 

\begin{thm}\label{thm.stab} Assume that the delay kernel $h(t)$ in system \eqref{sys.wilson.cowan.dd} satisfies the properties \eqref{delay.kernel.properties}.
\begin{enumerate}
\item In the non-delayed case (i.e. $h(t)=\delta(t)$), the equilibrium $(u^\star, v^\star)$ of system \eqref{sys.wilson.cowan.dd} is locally asymptotically stable if and only if the following inequality holds:
\begin{equation}\label{cond.as.stab.no.delay}
\alpha<\min\{2,\beta+1\}
\end{equation}
\item If the following inequality holds:
\begin{equation}\label{cond.as.stab.indep.of.delay}
|\alpha|+|\beta|<1
\end{equation}
then the equilibrium $(u^\star, v^\star)$ of system (\ref{sys.wilson.cowan.dd}) is locally asymptotically stable, regardless of the choice of the delay kernel $h(t)$.
\item If the following inequality holds:
\begin{equation}\label{cond.instab.indep.of.delay}
\beta<\alpha-1
\end{equation}
then the equilibrium state $(u^\star, v^\star)$ of system (\ref{sys.wilson.cowan.dd}) is unstable for any delay kernel $h(t)$.
\end{enumerate}
\end{thm}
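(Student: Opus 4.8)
The plan is to reduce all three statements to the scalar characteristic function $\Delta$ in \eqref{eq.char}, using two elementary facts about the Laplace transform $H$ of the kernel. Since $h\ge 0$ and $\int_0^\infty h(s)\,ds=1$, we have $H(0)=1$ and, for every $z$ with $\operatorname{Re} z\ge 0$,
\[
|H(z)|\le\int_0^\infty h(s)\,e^{-s\operatorname{Re} z}\,ds\le\int_0^\infty h(s)\,ds=1 ;
\]
moreover $x\mapsto H(x)$ is positive and strictly decreasing on $[0,\infty)$ with $H(x)\to 0$ as $x\to\infty$, and $(x+1)H(x)$ stays bounded there because $h$ is bounded. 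For part~1 I would then set $h=\delta$, i.e. $H\equiv 1$, so that \eqref{eq.char} collapses to $(z+1)^2-\alpha(z+1)+\beta=0$; this is exactly the characteristic polynomial $\det(zI-J)$ of the Jacobian $J$ of the undelayed vector field at $(u^\star,v^\star)$, whose trace equals $\alpha-2$ and whose determinant equals $\beta-\alpha+1$. The Routh--Hurwitz criterion for a planar system (negative trace and positive determinant, which here are also necessary) gives $\alpha<2$ and $\alpha<\beta+1$, that is, \eqref{cond.as.stab.no.delay}.

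For part~2 I would prove that \eqref{cond.as.stab.indep.of.delay} forces $\Delta(z)\ne 0$ whenever $\operatorname{Re} z\ge 0$. Writing $\rho=|z+1|\ge 1$ and $\eta=|H(z)|\le 1$, the reverse triangle inequality together with $\eta\rho\le\rho^2$ and $\eta^2\le\rho^2$ yields
\[
|\Delta(z)|\ \ge\ \rho^2-|\alpha|\,\eta\rho-|\beta|\,\eta^2\ \ge\ \rho^2\bigl(1-|\alpha|-|\beta|\bigr)\ >\ 0 .
\]
Consequently no characteristic root lies in the closed right half-plane, and the same estimate shows $|\Delta(z)|\to\infty$ as $|z|\to\infty$ there, so the roots stay bounded away from the imaginary axis; by the standard spectral characterization of stability for linear distributed-delay equations the equilibrium is then locally asymptotically stable, uniformly in the choice of kernel.

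For part~3 I would locate a positive real root of $\Delta$. On $[0,\infty)$ the map $x\mapsto\Delta(x)=(x+1)^2-\alpha H(x)(x+1)+\beta H(x)^2$ is continuous; it tends to $+\infty$ as $x\to\infty$ since $H(x)\to 0$ while $(x+1)H(x)$ remains bounded, whereas $\Delta(0)=1-\alpha+\beta$, which is strictly negative precisely under condition \eqref{cond.instab.indep.of.delay}. The intermediate value theorem then produces $x^\star>0$ with $\Delta(x^\star)=0$, i.e. an unstable characteristic root, for every admissible kernel $h$.

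The algebra in all three parts is routine; the delicate point is the implication used in part~2, namely that ``$\Delta$ has no zeros with $\operatorname{Re} z\ge 0$'' actually implies asymptotic stability of the linearization (and hence, by the principle of linearized stability, of the equilibrium). This rests on the fact that for this class of equations the trivial solution is asymptotically stable if and only if $\sup\{\operatorname{Re} z:\Delta(z)=0\}<0$ — equivalently, that characteristic roots cannot accumulate along the imaginary axis — which I expect to be the main technical ingredient and which one invokes from the standard theory of distributed-delay (Volterra integro-differential) equations, combined with the growth bound on $|\Delta|$ established above.
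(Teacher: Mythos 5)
Your proposal is correct and follows essentially the same route as the paper: Routh--Hurwitz on the reduced quadratic for part~1, the bounds $|H(z)|\le 1$ and $|z+1|\ge 1$ on the closed right half-plane for part~2, and $\Delta(0)<0$ together with $\Delta(x)\to+\infty$ plus the intermediate value theorem for part~3. The only cosmetic difference is in part~2, where you derive the direct lower bound $|\Delta(z)|\ge |z+1|^2(1-|\alpha|-|\beta|)>0$ instead of the paper's contradiction via the auxiliary quadratic $P(x)=x^2-|\alpha|x-|\beta|$; your remark that one must additionally rule out accumulation of characteristic roots at the imaginary axis is a point the paper leaves implicit.
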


\begin{proof}
1. In the non-delayed case ($H(z)=1$, for any $z\in\mathbb{C}$), the characteristic equation (\ref{eq.char}) becomes:
$$\Delta(z)=z^2+(2-\alpha)z+\beta-\alpha+1=0$$
The necessary and sufficient condition (\ref{cond.as.stab.no.delay}) for the asymptotic stability of the equilibrium state $(u^\star, v^\star)$ of system (\ref{sys.wilson.cowan.dd}) follows from the Routh-Hurwitz stability test.

2. { Using basic inequality techniques, we have:
$$|H(z)|=\left|\int\limits_{0}^\infty e^{ -zt}h(t)dt\right|\leq\int\limits_0^\infty |e^{ -z t}|h(t)dt=\int\limits_0^\infty e^{-\Re(z) t}h(t)dt.$$
Now, if we assume that the characteristic equation (\ref{eq.char}) has a root $z$ in the right half-plane ($\Re(z)\geq 0$), it follows that $e^{-\Re(z) t}\leq 1$, for any $t\geq 0$ and hence:
$$
|H(z)|\leq \int\limits_0^\infty h(t)dt=1.
$$
}
From (\ref{eq.char}) we deduce:
\begin{align*}
|z+1|^2&=|\alpha H(z)(z+1)-\beta H^2(z)|\\
&\leq |\alpha||z+1|+|\beta|
\end{align*}
Considering the polynomial $P(x)=x^2-|\alpha|x-|\beta|$, from inequality (\ref{cond.as.stab.indep.of.delay}) it follows that $P(1)>0$ and $P'(1)>0$, and hence $P(x)>0$ for any $x\geq 1$. From the above inequality, we have $P(|z+1|)\leq 0$, and hence, we deduce $|z+1|<1$, which is absurd, since $|z+1|^2=|z|^2+2\Re(z)+1\geq 1$.

Therefore, all the roots of the characteristic equation (\ref{eq.char}) are in the left half-plane and the equilibrium state $(u^\star, v^\star)$ of system (\ref{sys.wilson.cowan.dd}) is asymptotically stable, regardless of the delay kernel $h(t)$.

3. { 
Using the delay kernel properties \eqref{delay.kernel.properties} and the Laplace transform definition, we have $$H(0)=\ds\int\limits_0^\infty h(t)dt = 1,\quad\text{for any }t\geq 0.$$
Based on (\ref{eq.char}), it follows that
\begin{equation}\label{eq.delta.zero}
\Delta(0) = 1 - \alpha H(0) + \beta H^2(0) = 1 - \alpha + \beta.   
\end{equation}
Thus, condition (\ref{cond.instab.indep.of.delay}), i.e. $\beta < \alpha -1$ is equivalent to $\Delta(0)<0$. Given that the characteristic function $\Delta(z)$ is continuous on $[0,\infty)$ and}  $\Delta(z)\rightarrow\infty$ as $z\rightarrow\infty$, the characteristic equation (\ref{eq.char}) has at least one positive real root. Hence, the equilibrium state $(u^\star, v^\star)$ of system (\ref{sys.wilson.cowan.dd}) is unstable, regardless of the delay kernel $h(t)$.
\end{proof}

\subsection{Saddle-node bifurcation}

\begin{thm}[Saddle-node bifurcation]\label{thm.saddle.node}
A saddle-node bifurcation takes place at the equilibrium state $(u^\star, v^\star)$ of system (\ref{sys.wilson.cowan.dd}), regardless of the delay kernel $h(t)$, if and only if $\alpha\neq 2$ and
\begin{equation}\label{cond.saddle.node}
\beta=\alpha-1
\end{equation}
\end{thm}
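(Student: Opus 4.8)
The plan is to characterize the saddle-node bifurcation through the root $z=0$ of the characteristic equation \eqref{eq.char}, exactly as in the classical finite-dimensional case but now for the functional differential equation \eqref{sys.wilson.cowan.dd}. A saddle-node bifurcation at $(u^\star,v^\star)$ requires that $z=0$ be an eigenvalue of the linearization, that it be a \emph{simple} root of $\Delta$, that no other root sit on the imaginary axis, and that the usual transversality and nondegeneracy conditions hold. So the first task is to translate ``$z=0$ is a root'' and ``$z=0$ is a simple root'' into conditions on $\alpha$ and $\beta$, and to check that these conditions do not depend on the particular kernel $h$.

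For membership of $0$ in the spectrum: since $h$ is a probability density, $H(0)=\int_0^\infty h(s)\,ds=1$, so \eqref{eq.char} evaluated at $z=0$ gives $\Delta(0)=1-\alpha+\beta$; hence $z=0$ is a root if and only if $\beta=\alpha-1$, which is \eqref{cond.saddle.node}. (This is precisely the boundary of the delay-independent instability region $\beta<\alpha-1$ from Theorem~\ref{thm.stab}, where $\Delta(0)$ changes sign.) For simplicity of this root, differentiate \eqref{eq.char} and use $H(0)=1$ and $H'(0)=-\int_0^\infty s\,h(s)\,ds=-\tau$ to obtain
\[
\Delta'(0)=2-\alpha(1-\tau)-2\beta\tau ,
\]
and substituting $\beta=\alpha-1$ collapses this to $\Delta'(0)=(2-\alpha)(1+\tau)$. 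Since $\tau\ge 0$ is finite, $1+\tau>0$, so under \eqref{cond.saddle.node} the root $z=0$ is simple if and only if $\alpha\neq 2$, independently of the choice of kernel. This already yields the ``only if'' direction: if $\beta\neq\alpha-1$ then $0$ is not an eigenvalue and no zero-eigenvalue bifurcation can occur, while if $\alpha=2$ (which forces $\beta=1$) then $0$ is a root of multiplicity at least two, and the situation is of Bogdanov--Takens rather than saddle-node type.

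For the ``if'' direction I would invoke the center manifold and normal form machinery for distributed-delay equations: viewing \eqref{sys.wilson.cowan.dd} as an abstract functional differential equation and selecting a bifurcation parameter among the system's entries so that $\alpha,\beta$ depend smoothly on it, the implicit function theorem applied to $\Delta(z)=0$ produces a smooth branch of eigenvalues through $0$ crossing the imaginary axis transversally, because $\partial_z\Delta(0)=\Delta'(0)=(2-\alpha)(1+\tau)\neq 0$; the reduced equation on the one-dimensional center manifold then carries the saddle-node normal form. The main obstacle, and the step I would treat most carefully, is the infinite-dimensional bookkeeping: verifying that under \eqref{cond.saddle.node} with $\alpha\neq 2$ the simple root $z=0$ is the only root of $\Delta$ on the imaginary axis (so the center subspace is genuinely one-dimensional), and justifying the center manifold reduction in the distributed-delay setting. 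The nondegeneracy of the quadratic coefficient is generic and, if required, can be extracted from a second-order expansion of $f$ at the equilibrium; with $f$ assumed only $C^1$ it is most naturally phrased as a genericity hypothesis, the structural content of the theorem being the kernel-independent eigenvalue conditions $\beta=\alpha-1$ and $\alpha\neq 2$.
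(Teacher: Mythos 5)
Your proposal is correct and follows essentially the same route as the paper: $\Delta(0)=1-\alpha+\beta$ gives the root condition $\beta=\alpha-1$, the computation $\Delta'(0)=(2-\alpha)(1+\tau)$ gives simplicity iff $\alpha\neq 2$, and the transversality you describe is exactly what the paper verifies by computing $\left.\frac{dz}{d\beta}\right|_{\beta=\alpha-1}=\frac{1}{(\alpha-2)(\tau+1)}\neq 0$. Your closing remarks about the center manifold reduction and the quadratic nondegeneracy being left as genericity hypotheses (unavoidable with $f$ only $C^1$) identify precisely what the paper's proof also omits, so they do not constitute a divergence from its argument.
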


\begin{proof}
{ Via relationship \eqref{eq.delta.zero}, 
one can see that,} condition (\ref{cond.saddle.node}) is equivalent to $\Delta(0)=0$ (i.e. the characteristic equation has a zero root). Moreover, $z=0$ is a simple root of the characteristic equation if and only if $\alpha\neq 2$.

Let us denote by $z(\beta)$ the root of the characteristic equation \eqref{eq.char} which satisfies $z(\alpha-1)=0$. Taking the derivative with respect to $\beta$ in equation \eqref{eq.char}, we obtain: 
$$2(z+1)\frac{dz}{d\beta}-\alpha H'(z)(z+1)\frac{dz}{d\beta}-\alpha H(z)\frac{dz}{d\beta}+2\beta H'(z)H(z)\frac{dz}{d\beta}+H^2(z)=0,$$
and hence: 
$$\frac{dz}{d\beta}=-\dfrac{H^2(z)}{2(z+1)-\alpha H'(z)(z+1)-\alpha H(z)+2\beta H'(z)H(z)}.$$
As $H(0)=1$ and $H'(0)=-\tau$, it follows that:
$$\left.\frac{dz}{d\beta}\right|_{\beta=\alpha-1}=\dfrac{1}{(\alpha-2)(\tau+1)}\neq 0.$$
{ This completes the proof of transversality. While we have not verified the conditions for nondegeneracy (which are rather difficult to check analytically), one would expect the nonlinearity (hence the saddle-node bifurcation) to be generic.}
\end{proof}

\subsection{Hopf bifurcation}

In the following, we will show that the average time delay $\tau$ of the delay kernel $h(t)$ plays an important role in the Hopf bifurcation analysis.

Let $\hat{h}(t)=\tau h(\tau t)$, for any $t\geq 0$. The function $\hat{h}$ is a probability density function with the mean value:
$$\int\limits_0^{\infty}t\hat{h}(t)dt=\int\limits_0^{\infty}t\tau h(\tau t)dt=\tau^{-1}\int\limits_0^{\infty}uh(u)du=\tau^{-1}\tau=1.$$
The Laplace transform $\hat{H}(z)$ of $\hat{h}(t)$ is
$$\hat{H}(z)=\int\limits_0^\infty e^{  -zt}\hat{h}(t)dt=\int\limits_0^\infty e^{-zt}\tau h(\tau t)dt=\int\limits_0^\infty e^{ -z\frac{u}{\tau}}h(u)du=H\left(\frac{z}{\tau}\right)$$
We assume from now on that $\hat{H}(z)$ is independent of the average time delay $\tau$. In fact, we note that this is true for several classes of delay kernels, such as:
\begin{itemize}
\item for the Dirac kernel (discrete time delay): $\ds\hat{H}(z)=e^{-z}$;
\item for the $p$-Gamma kernel: $\ds\hat{H}(z)=\left(\frac{p}{p+z}\right)^p$.
\end{itemize}

Based on \cite{ilisnskii1976zeros}, we write $\hat{H}$ in the polar form as:
$$
\hat{H}(i\omega)=\rho(\omega)e^{\ds -i\theta(\omega)}
$$
with $\rho(0)=1$, $\theta(0)=0$. 
We further assume that: 
\begin{align*}
&(A_1):\quad \rho \text{ and } \theta \text{ are of class $C^1$ on $\mathbb{R}$};\\
&(A_2): \quad \rho(\omega)>0,  \text{ for any }\omega>0;\\
&(A_3):\quad \rho'(\omega)\leq 0, \text{ for any }\omega>0;\\
&(A_4):\quad \theta'(\omega)>0, \text{ for any }\omega>0.
\end{align*}
It is easy to verify that for the above mentioned probability densities, these assumptions are satisfied. { It is also important to note that $\omega\mapsto \hat{H}(-i\omega)$ is in fact the characteristic function of a random variable with the mean equal to $1$.}

With the change of variable $z\mapsto \ds\frac{z}{\tau}$, the characteristic equation (\ref{eq.char}) becomes
\begin{equation}\label{eq.char.hat}
\hat{\Delta}(z)=\tau^2\Delta\left(\frac{z}{\tau}\right)=(z+\tau)^2-\alpha\tau \hat{H}(z)(z+\tau)+\beta\tau^2 \hat{H}^2(z)=0
\end{equation}
Denoting $\ds Q_\tau(z)=\frac{z+\tau}{\tau \hat{H}(z)}$, it follows that the characteristic equation is equivalent to
\begin{equation}\label{eq.Q}
Q_\tau^2(z)-\alpha Q_\tau(z)+\beta=0
\end{equation}
The characteristic equation (\ref{eq.char}) has a pair of complex conjugate roots on the imaginary axis if and only if there exists $\omega>0$ such that
$Q(i\omega)$ is a root of the polynomial $P(\lambda)=\lambda^2-\alpha\lambda+\beta$.

\textbf{Case 1: $\alpha^2-4\beta<0$.}

In this case, the polynomial $P(\lambda)$ has complex conjugate roots $Q_\tau(i\omega)$ and $\overline{Q_\tau(i\omega)}$, and hence, Hopf bifurcation may only take place at the equilibrium state $(u^\star, v^\star)$ of system (\ref{sys.wilson.cowan.dd}) along the curve $(\gamma_\tau)$ from the $(\alpha,\beta)$-plane, defined by the following parametric equations:
\begin{equation}\label{sist_gamma}(\gamma_\tau):\quad
\left\{
  \begin{array}{l}
	\ds\alpha =\alpha_\tau(\omega)=2\Re(Q_\tau(i\omega))=\frac{2}{\rho(\omega)}\left[\cos\theta(\omega)-\frac{\omega}{\tau}\sin\theta(\omega)\right]\\
    \ds\beta =\beta_\tau(\omega)=|Q_\tau(i\omega)|^2=\frac{1}{\rho(\omega)^2}\left(1+\frac{\omega^2}{\tau^2}\right)
  \end{array}
\right.\quad ,~\omega>0.
\end{equation}

Indeed, we prove the following: 

\begin{lem}\label{lem.gamma}
If $\alpha^2-4\beta<0$ then the characteristic equation  (\ref{eq.char}) has a pair of complex conjugate roots on the imaginary axis if and only if $(\alpha,\beta)$ belong to the curve $(\gamma_\tau)$ defined by (\ref{sist_gamma}). 
\end{lem}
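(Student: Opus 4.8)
The plan is to unravel the definition of the curve $(\gamma_\tau)$ directly, using the reformulation \eqref{eq.Q} of the characteristic equation. Since the scaling $z\mapsto z/\tau$ with $\tau>0$ maps the imaginary axis onto itself and $\hat{\Delta}(z)=\tau^2\Delta(z/\tau)$, equation \eqref{eq.char} has a pair of complex conjugate roots on the imaginary axis if and only if $\hat{\Delta}$ has a root $i\omega$ for some $\omega\neq 0$; and since $\hat h$ is real-valued we have $Q_\tau(-i\omega)=\overline{Q_\tau(i\omega)}$ and $\hat{\Delta}(\bar z)=\overline{\hat{\Delta}(z)}$, so it is no loss of generality to assume $\omega>0$. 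Moreover $\hat H(i\omega)=\rho(\omega)e^{-i\theta(\omega)}\neq 0$, so dividing \eqref{eq.char.hat} by $\tau^2\hat H^2(i\omega)$ shows that $\hat{\Delta}(i\omega)=0$ is equivalent to $Q_\tau(i\omega)$ being a root of $P(\lambda)=\lambda^2-\alpha\lambda+\beta$.

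The heart of the argument is the elementary observation that, when $\alpha^2-4\beta<0$, the polynomial $P$ has no real root and factors as $P(\lambda)=(\lambda-\lambda_+)(\lambda-\lambda_-)$ with $\lambda_-=\overline{\lambda_+}$, so that $\Re(\lambda_\pm)=\alpha/2$ and $|\lambda_\pm|^2=\lambda_+\lambda_-=\beta$. Consequently, for a complex number $w$ one has: $w$ is a root of $P$ $\iff$ $w\in\{\lambda_+,\overline{\lambda_+}\}$ $\iff$ $\alpha=2\Re(w)$ and $\beta=|w|^2$ (the last equivalence because $\Re$ and $|\cdot|$ are invariant under conjugation, while conversely $\alpha=2\Re w$ and $\beta=|w|^2$ forces $P(\lambda)=(\lambda-w)(\lambda-\bar w)$). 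Applying this with $w=Q_\tau(i\omega)$: $Q_\tau(i\omega)$ is a root of $P$ if and only if $\alpha=2\Re(Q_\tau(i\omega))$ and $\beta=|Q_\tau(i\omega)|^2$, which is precisely the condition that $(\alpha,\beta)$ lie on $(\gamma_\tau)$ with parameter value $\omega$. Combined with the first paragraph this yields the claimed equivalence. Finally, substituting the polar form $Q_\tau(i\omega)=(\tau+i\omega)e^{i\theta(\omega)}/(\tau\rho(\omega))$ produces the explicit parametric equations \eqref{sist_gamma}, and since a point of $(\gamma_\tau)$ satisfies $\alpha^2-4\beta=-4(\Im Q_\tau(i\omega))^2$, the standing hypothesis $\alpha^2-4\beta<0$ forces $Q_\tau(i\omega)\notin\mathbb{R}$, hence $\omega\neq 0$ and $\pm i\omega$ is a genuine complex conjugate pair.

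There is no serious obstacle here: once \eqref{eq.Q} is available, the lemma is essentially a restatement of the definition of $(\gamma_\tau)$. The only points that require a little care are the two reductions in the first paragraph — passing between $\Delta$ and $\hat\Delta$ via the positive scaling $\tau$, and reducing to $\omega>0$ and to a single root of $P$ by conjugation symmetry — together with the remark that the zero root ($\omega=0$) is excluded under the hypothesis $\alpha^2-4\beta<0$ and is instead covered by Theorem~\ref{thm.saddle.node}.
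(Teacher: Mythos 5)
Your proof is correct and follows essentially the same route as the paper's: reduce via the substitution $z\mapsto z/\tau$ and the function $Q_\tau$ to the condition that $Q_\tau(i\omega)$ is a root of $P(\lambda)=\lambda^2-\alpha\lambda+\beta$, then use Vieta's formulas (equivalently, your observation that for a negative discriminant $w$ is a root of $P$ iff $\alpha=2\Re(w)$ and $\beta=|w|^2$) to identify this with membership on $(\gamma_\tau)$. The only difference is that you spell out the reductions (conjugation symmetry, nonvanishing of $\hat H(i\omega)$, exclusion of $\omega=0$) that the paper leaves implicit, which is a welcome but not substantive addition.
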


\begin{proof}
{Since $\alpha^2-4\beta<0$, the} characteristic equation  (\ref{eq.char}) has a pair of complex conjugate roots on the imaginary axis if and only if there exists $\omega>0$ such that the polynomial $P(\lambda)$ has complex conjugate roots $Q_\tau(i\omega)$ and $\overline{Q_\tau(i\omega)}$. As
\begin{align*}
Q_\tau(i\omega)&=\frac{i\omega
+\tau}{\tau\hat{H}(i\omega)}=\frac{i\omega+\tau}{\tau\rho(\omega)e^{-i\theta(\omega)}}=\frac{(i\omega
+\tau)(\cos\theta(\omega)+i\sin\theta(\omega))}{\tau\rho(\omega)}=\\
&=\frac{\tau \cos\theta(\omega)-\omega \sin\theta(\omega)+i(\omega \cos\theta(\omega)+\tau \sin\theta(\omega))}{\tau\rho(\omega)}
\end{align*}
from {Vieta's} formulas, we obtain:
\begin{align*}
\alpha&=Q_\tau(i\omega)+\overline{Q_\tau(i\omega)}=2\Re(Q_\tau(i\omega))=2\cdot\frac{\tau \cos\theta(\omega)-\omega \sin\theta(\omega)}{\tau\rho(\omega)}=\frac{2}{\rho(\omega)}\left[\cos\theta(\omega)-\frac{\omega}{\tau}\sin\theta(\omega)\right];\\
\beta &=Q_\tau(i\omega)\overline{Q_\tau(i\omega)}=|Q_\tau(i\omega)|^2=\left|\frac{i\omega+\tau}{\tau\rho(\omega)e^{-i\theta(\omega)}}\right|^2=\frac{\omega^2+\tau^2}{\tau^2\rho(\omega)^2}=\frac{1}{\rho(\omega)^2}\left(1+\frac{\omega^2}{\tau^2}\right).
\end{align*}
Therefore, we obtain the equivalence from the statement. 
\end{proof}

\textbf{Case 2: $\alpha^2-4\beta\geq 0$.}

In this case, the following preliminary lemma will be needed to establish further results:

\begin{lem}\label{lem.omega}
Let $\tilde{\theta}=\lim\limits_{\omega\rightarrow\infty}\theta(\omega)$ and $K=\lceil\tilde{\theta}/\pi\rceil$ (if $\tilde{\theta}=\infty$ then $K=\infty$). The function
\begin{equation}\label{eq.omega} q_\tau(\omega)=\omega\cos\theta(\omega)+\tau \sin\theta(\omega)
\end{equation}
has exactly $N$ roots $0=\omega_0(\tau)<\omega_1(\tau)<...<\omega_{N-1}(\tau)$, such that $\theta(\omega_n(\tau))\in\left(n\pi-\dfrac{\pi}{2},n\pi\right)$, for {$n\in\{1,2,...,N-1\}$}, where:
$$N=\begin{cases}K, &\text{if }K\in\mathbb{N}~\text{and}~(-1)^K q_\tau(\theta^{-1}(\tilde{\theta}))\leq 0;\\
K+1, &\text{if }K\in\mathbb{N}~\text{and}~(-1)^K q_\tau(\theta^{-1}(\tilde{\theta}))>0;\\
\infty, &\text{if }K=\infty.
\end{cases}$$
\end{lem}

\begin{proof}
From $\theta(0)=0$ it follows that $\omega_0(\tau)=0$ is a root of the function $q_\tau(\omega)$. As $\theta$ is increasing, for any {$n\in\{1,2,...,K\}$} it is easy to see that 
\begin{align*}(-1)^n q_\tau(\omega)<0\quad &\text{for any}~ \omega>0~\text{such that}~ \theta(\omega)\in\left((n-1)\pi,n\pi-\dfrac{\pi}{2}\right)\\
(-1)^n q'_\tau(\omega)>0\quad &\text{for any}~ \omega>0~\text{such that}~\theta(\omega)\in\left(n\pi-\dfrac{\pi}{2},n\pi\right)
\end{align*}
Therefore, for each {$n\in\{1,2,...,K-1\}$}, there exists a unique root $\omega_n(\tau)$ of the function $q_\tau(\omega)$ which satisfies $\theta(\omega_n(\tau))\in\left(n\pi-\dfrac{\pi}{2},n\pi\right)$. The $(K+1)$-th root exists if and only if $(-1)^Kq_\tau(\theta^{-1}(\tilde{\theta}))>0$.
\end{proof}

The following result holds:

\begin{lem}\label{lem.ltau}
If $\alpha^2-4\beta\geq 0$ then the characteristic equation  (\ref{eq.char}) has a pair of complex conjugate roots $\pm i\omega\tau^{-1}$  on the imaginary axis if and only if $\omega$ is a positive root of \eqref{eq.omega}
and $(\alpha,\beta)$ belong to the line \begin{equation}\label{line_tau2}
(l):\quad \beta=\frac{\alpha}{\rho(\omega)\cos\theta(\omega)}-\frac{1}{(\rho(\omega)\cos\theta(\omega))^2},\end{equation}
\end{lem}

\begin{proof}
{Since $\alpha^2-4\beta\geq 0$, the} characteristic equation  (\ref{eq.char}) has a pair of complex conjugate roots on the imaginary axis if and only if there exists $\omega>0$ such that $Q_\tau(i\omega)$ is a real root of the polynomial $P(\lambda)$. Therefore, 
$\Im(Q(i\omega))=0$ and hence, it follows that $\omega$ is a root of the equation $$\omega \cos\theta(\omega)+\tau \sin\theta(\omega)=0.$$
Assuming that such a root exists, let us denote it by $\omega^*$. Hence:
\begin{align*}
Q_\tau(i\omega^*)&=\Re(Q_\tau(i\omega^*))=\frac{\tau \cos\theta(\omega^*)-\omega^* \sin\theta(\omega^*)}{\tau\rho(\omega^*)}=\frac{\tau \cos\theta(\omega^*)+\frac{(\omega^*)^2}{\tau}\cos\theta(\omega^*)}{\tau\rho(\omega^*)}\\
&=\frac{\cos\theta(\omega^*)}{\rho(\omega^*)}\left(1+\frac{(\omega^*)^2}{\tau^2}\right)=\frac{\cos\theta(\omega^*)}{\rho(\omega^*)}\left(1+\frac{\sin^2\theta(\omega^*)}{\cos^2\theta(\omega^*)}\right)=\frac{1}{\rho(\omega^*)\cos\theta(\omega^*)}.
\end{align*}
As $Q_\tau(i\omega^\star)$ is a root of $P(\lambda)$, we obtain:
$$
\beta=\alpha Q_\tau(i\omega^\star)-Q^2_\tau(i\omega^\star)=\frac{\alpha}{\rho(\omega^*)\cos\theta(\omega^*)}-\frac{1}{(\rho(\omega^*)\cos\theta(\omega^*))^2}.
$$
Therefore, the proof is complete. \end{proof}

{ 
\begin{lem}\label{lem.double.root}
The characteristic equation \eqref{eq.char} has a double pure imaginary root if and only if $\alpha^2 = 4 \beta$.
\end{lem}
\begin{proof}
Assume that there exists $\omega>0$ such that $z=i\omega\tau^{-1}$ is a double root of \eqref{eq.char}, i.e.
\begin{equation}\label{sist_delta}
\quad
\left\{
  \begin{array}{l}
	\ds\Delta(z)=(z+1)^2-\alpha H(z)(z+1)+\beta H^2(z)=0\\
    \ds\Delta'(z)=2(z+1)-\alpha H(z) -\alpha (z+1)H'(z)+2\beta H(z)H'(z)=0
  \end{array}
\right.\quad
\end{equation}
Assumption $(A_2)$ guarantees that $H(z)\neq 0$. Eliminating $\beta$ from system \eqref{sist_delta} we obtain the equality:
$$\left(2\frac{z+1}{H(z)} - \alpha \right)\left(1-H'(z)\frac{z+1}{H(z)}\right)  = 0. 
$$
On one hand, if the first term in the above equation is zero, replacing into the first equation of system \eqref{sist_delta}, it follows that $\alpha^2 = 4 \beta$. On the other hand, if the second term of the above equation is zero, as $z = i\omega\tau^{-1}$ and $H(i\omega\tau^{-1})=\rho(\omega)e^{\ds -i\theta(\omega)}$, we obtain:
$$(\omega -i\tau)\left[ \rho'(\omega)-i\rho(\omega)\theta'(\omega)\right] = \rho(\omega)
$$
Taking the imaginary part in the previous equality, leads to  $\tau\rho'(\omega)=-\omega\rho(\omega)\theta'(\omega) $. 
Next, taking the real part, we get
$$ \rho(\omega)=\frac{\omega^2+\tau^2}{\omega}\rho'(\omega).$$
From assumption $(A_2)$, we have $\rho(\omega)>0$ and from $(A_3)$, we have $\rho'(\omega)\leq 0$. Therefore, the above equality is a contradiction. In conclusion, the characteristic equation \eqref{eq.char} has a double pure imaginary root if and only if $\alpha^2 = 4 \beta$. 
\end{proof}

\begin{remark} From Lemma \ref{lem.double.root} it follows that the characteristic equation \eqref{eq.char} has a double pure imaginary root if and only if $(\alpha,\beta)$ are at the intersection points of the curve $(\gamma_\tau)$ with the lines provided by Lemma \ref{lem.ltau} (if any, depending on the existence of the roots of equation \eqref{eq.omega}). These intersection points lie on the parabola $\alpha^2 = 4 \beta$ (as in Proposition \ref{prop.lines.curve}). 
\end{remark}
}

\subsection{Main results}

Combining the previous four lemmas, we deduce that the lines defined for {$n\in\{0,1,...,N-1\}$} by 
\begin{equation}\label{eq.lines}
(l_n):\quad \beta=\mu(\omega_n(\tau))\left[\alpha-\mu(\omega_n(\tau)\right]\quad\text{where}~\mu(\omega_n(\tau))=\frac{1}{\rho(\omega_n(\tau))\cos\theta(\omega_n(\tau))},\end{equation}
as well as the curve $(\gamma_\tau)$ defined parametrically by  \eqref{sist_gamma} play an important role in the bifurcation analysis. Hence, some remarkable properties of these lines and curve are proved in the following: 

\begin{prop}\label{prop.lines.curve}
For the lines $(l_n)$ defined by \eqref{eq.lines} and the curve $(\gamma_\tau)$ defined by \eqref{sist_gamma} the following properties hold: 
\begin{itemize}
    \item[i.] $|\mu(\omega_n(\tau))|$ is increasing with respect to $n$, with $\text{sign}(\mu(\omega_n(\tau)))=(-1)^n$ and {$|\mu(\omega_n(\tau))|>1$};
    \item[ii.] the line $(l_n)$ is included in the region $\alpha^2-4\beta\geq 0$ and is tangent to the parabola $\alpha^2=4\beta$ at the point ${P_n}\left(2\mu(\omega_n(\tau)),\mu(\omega_n(\tau))^2\right)$;  
    \item[iii.] the intersection of the line $(l_n)$ with the vertical axis $\alpha=0$ is the point of $\beta$-coordinate $-\mu(\omega_n(\tau))^2$.
    \item[iv.] the curve $(\gamma_\tau)$ is a simple curve included in the region $\alpha^2-4\beta\leq 0$;
    \item[v.] the line $(l_n)$, the curve $(\gamma_\tau)$ and the parabola $\alpha^2=4\beta$ intersect at the point ${P_n}\left(2\mu(\omega_n(\tau)),\mu(\omega_n(\tau))^2\right)$.
\end{itemize}
\end{prop}

\begin{proof}
For the proof of (i.) From equation \eqref{eq.omega} it easily follows that $$\cos^2(\omega_n(\tau))=\frac{\tau^2}{\omega_n(\tau)^2+\tau^2},$$
and hence: 
$$|\mu(\omega_n(\tau))|=\frac{\sqrt{\omega_n(\tau)^2+\tau^2}}{\tau\rho(\omega_n(\tau))}.$$
Therefore, as $\omega\mapsto\rho(\omega)$ is decreasing, we deduce that the function $n\mapsto |\mu(\omega_n(\tau))|$ is increasing. Moreover, from Lemma \ref{lem.omega} it follows that $\theta(\omega_n(\tau))\in\left(n\pi-\dfrac{\pi}{2},n\pi\right)$ and hence $\text{sign}(\mu(\omega_n(\tau)))=\text{sign}(\cos(\theta(\omega_n(\tau))))=(-1)^n$. 

Moreover, assumption $(A_3)$ implies the function $\beta_\tau(\omega)$ is strictly increasing, hence, the curve $(\gamma_\tau)$ does not cross itself, hence statement (iv.) follows.

The rest of the statements can be easily proved by elementary reasoning.
\end{proof}

We are now ready to prove the main theoretical results, which are based on the previously described root locus technique.

\begin{thm}\label{thm.stab.dom.bounded}
If $N\geq 2$, denoting
$$\omega_\tau=\omega_1(\tau)=\min\{\omega>0~:~\tau \sin\theta(\omega)+\omega \cos\theta(\omega)=0\}\quad\text{and}\quad \mu_\tau=\mu(\omega_1(\tau))=\left(\rho(\omega_\tau)\cos\theta(\omega_\tau)\right)^{-1},$$
the stability region $S_\tau(\alpha,\beta)$ of the equilibrium $(u^\star, v^\star)$ of system \eqref{sys.wilson.cowan.dd} is bounded by the line segments and curve given below:
\begin{align*}
(l_0):&\quad \beta=\alpha-1&, ~\alpha\in\left[1+\mu_\tau,2\right];\\
(l_\tau):&\quad \beta=\mu_\tau(\alpha-\mu_\tau) &,~\alpha\in\left[2\mu_\tau,1+\mu_\tau\right];\\
(\gamma_\tau):&\quad \left\{
  \begin{array}{l}
    \ds\alpha =\displaystyle\frac{2}{\rho(\omega)}\left[\cos\theta(\omega)-\frac{\omega}{\tau}\sin\theta(\omega)\right]\\
    \ds\beta =\displaystyle\frac{1}{\rho(\omega)^2}\left(1+\frac{\omega^2}{\tau^2}\right)
  \end{array}
\right.&,~\omega\in (0,\omega_\tau).
\end{align*}
At the boundary of the stability region $S_\tau(\alpha,\beta)$, the following bifurcation phenomena take place in a neighborhood of the equilibrium $(u^\star, v^\star)$ of system \eqref{sys.wilson.cowan.dd}:
\begin{itemize}
\item[a.] Saddle-node bifurcations take place along the open line segment $(l_0)$;
\item[b.] Hopf bifurcations take place along the open line segment $(l_\tau)$ and curve $(\gamma_\tau)$;
\item[c.] Bogdanov-Takens bifurcation at $(\alpha,\beta)=(2,1)$;
\item[d.] Double-Hopf bifurcation at $(\alpha,\beta)=\left(2\mu_\tau,\mu_\tau^2\right)$;
\item[e.] Zero-Hopf bifurcation $(\alpha,\beta)=\left(1+\mu_\tau,\mu_\tau\right)$.
\end{itemize}
\end{thm}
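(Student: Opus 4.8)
The plan is to identify $S(\alpha,\beta)$ as a connected region in the $(\alpha,\beta)$–plane whose boundary consists of parameter values at which the characteristic function $\Delta$ in \eqref{eq.char} has a root on the imaginary axis, and then to read off the bifurcation type at each such value. Two preliminary facts are needed. First, the set of $(\alpha,\beta)$ for which all roots of $\Delta$ lie in $\{\Re z<0\}$ is open: this follows from continuous dependence of the roots of $\Delta$ on $(\alpha,\beta)$ together with the fact that $|\Delta(z)|\to\infty$ as $\Re z\to+\infty$ uniformly, so roots cannot escape to infinity through the right half-plane. Second, this set is nonempty, e.g.\ it contains $(0,0)$, where $\Delta(z)=(z+1)^2$, and more generally the region $|\alpha|+|\beta|<1$ by Theorem~\ref{thm.stab}(2). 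Let $S$ be the connected component containing such an anchor point. Then $\partial S$ is contained in the locus where $\Delta$ has a root with $\Re z=0$; by Theorem~\ref{thm.saddle.node} this locus contains the line $\beta=\alpha-1$ (a zero root, simple when $\alpha\neq2$), and by Lemmas~\ref{lem.gamma}--\ref{lem.ltau} it contains the curve $(\gamma_\tau)$ when $\alpha^2-4\beta<0$ and, when $\alpha^2-4\beta\geq0$, the lines $(l)$ of Lemma~\ref{lem.ltau} indexed by the positive roots of \eqref{eq.omega}.

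Next I would determine which arcs of this locus actually bound the component $S$. Along $(\gamma_\tau)$ one has $\alpha^2-4\beta=-4\,[\Im Q_\tau(i\omega)]^2\le0$, with equality exactly when $\omega$ solves \eqref{eq.omega}; moreover $\beta_\tau(\omega)=\rho(\omega)^{-2}(1+\omega^2\tau^{-2})$ is strictly increasing (since $\rho$ decreases), so $(\gamma_\tau)$ leaves $(2,1)$ as $\omega\to0^+$ --- the point the text already notes lies on $\beta=\alpha-1$ --- and first touches the parabola $\alpha^2=4\beta$ at $\omega=\omega_\tau$. A direct computation using $\tau\sin\theta(\omega_\tau)=-\omega_\tau\cos\theta(\omega_\tau)$ and $1+\omega_\tau^2\tau^{-2}=\cos^{-2}\theta(\omega_\tau)$ shows this first contact point is $(2\mu_\tau,\mu_\tau^2)$ and that the line $(l)$ attached to $\omega_\tau$ is exactly $(l_\tau):\beta=\mu_\tau(\alpha-\mu_\tau)$, which is tangent to $\alpha^2=4\beta$ at $(2\mu_\tau,\mu_\tau^2)$ and meets $\beta=\alpha-1$ at $(1+\mu_\tau,\mu_\tau)$; one also notes $\mu_\tau^2>1$ and $\mu_\tau<0$ (from $\theta(\omega_\tau)\in(\pi/2,\pi)$), so $1+\mu_\tau<2$. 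Since every line $(l)$ lies on or below the parabola while $(\gamma_\tau)$ for $\omega>\omega_\tau$ lies strictly above it with $\beta_\tau\to\infty$, the three arcs $(l_0)$, $(l_\tau)$, $(\gamma_\tau)$ with the stated parameter ranges close up into a Jordan curve, the remaining imaginary-root arcs (further lines $(l)$, and $(\gamma_\tau)$ for $\omega\geq\omega_\tau$) do not meet the bounded region $S_0$ it encloses, and the anchor point lies in $S_0$. As $S_0$ is connected and contains no parameter with an imaginary-axis root, $S_0\subseteq S$.

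For the reverse inclusion I would run a crossing argument that also yields items (a)--(b). On $(l_0)$ with $\alpha<2$, Theorem~\ref{thm.saddle.node} gives $\left.\frac{dz}{d\beta}\right|_{\beta=\alpha-1}=\frac{1}{(\alpha-2)(\tau+1)}<0$, so a simple real root crosses into $\Re z>0$ when $\beta$ decreases through $\alpha-1$: a nondegenerate saddle-node, with $S$ on the side $\beta>\alpha-1$. On $(\gamma_\tau)$ and $(l_\tau)$ I would differentiate \eqref{eq.Q} to compute the velocity of the crossing eigenvalue as $(\alpha,\beta)$ moves normal to the arc, and use $\rho'<0$, $\theta'>0$ to show this velocity points into $\Re z>0$ on the outer side, so the count of roots in $\Re z>0$ jumps by two there. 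Tracking this count along a path from the anchor point shows every point just outside $S_0$ is unstable, hence $S\subseteq\overline{S_0}$ and $S=\mathrm{int}\,S_0$, $\partial S=(l_0)\cup(l_\tau)\cup(\gamma_\tau)$; the same computation shows that on the \emph{open} arcs the relevant root is simple and is the only one on the imaginary axis, so the crossings are a genuine saddle-node and a genuine Hopf. For the corners: at $(2,1)$, $\alpha=2$ forces $\Delta(0)=\Delta'(0)=0\neq\Delta''(0)$ with no other imaginary roots, a Bogdanov--Takens point (c); at $(2\mu_\tau,\mu_\tau^2)$ the quadratic $P(\lambda)=\lambda^2-\alpha\lambda+\beta$ has the double root $\mu_\tau=Q_\tau(i\omega_\tau)$, so $\pm i\omega_\tau\tau^{-1}$ are double imaginary roots of $\Delta$ (the meeting of the two Hopf arcs), a (resonant) double-Hopf point (d); at $(1+\mu_\tau,\mu_\tau)$, $P$ factors as $(\lambda-1)(\lambda-\mu_\tau)$ with $Q_\tau(0)=1$ and $Q_\tau(i\omega_\tau)=\mu_\tau$, giving simultaneous simple roots $z=0$ and $\pm i\omega_\tau\tau^{-1}$, a zero-Hopf point (e).

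The main obstacle is the global control needed in the second and third steps: showing that \emph{no} imaginary-root arc other than the three listed ones ever reaches the particular component $S$ --- i.e.\ locating all roots of the transcendental equation \eqref{eq.char}, not just the crossing eigenvalue --- and, for a general kernel satisfying only $\rho(0)=1$, $\theta(0)=0$, $\rho'<0$, $\theta'>0$, verifying that $(\gamma_\tau)$, $(l_\tau)$, $(l_0)$ genuinely bound a single region with the anchor inside. The codimension-two classifications at $(2,1)$, $(2\mu_\tau,\mu_\tau^2)$ and $(1+\mu_\tau,\mu_\tau)$ are the other delicate point, since they require normal-form reductions on a low-dimensional center manifold of a distributed-delay system and verification of the associated nondegeneracy conditions.
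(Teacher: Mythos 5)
Your proposal is correct and follows essentially the same route as the paper: continuity of the roots of $\Delta$ in $(\alpha,\beta)$ plus the imaginary-axis locus from Theorem~\ref{thm.saddle.node} and Lemmas~\ref{lem.gamma}--\ref{lem.ltau} to delimit $S(\alpha,\beta)$, transversality obtained by differentiating \eqref{eq.Q} in $\alpha$ and $\beta$ and comparing the gradient of $\Re(z)$ with the outward normal, and the codimension-two points read off from the intersections of $(l_0)$, $(l_\tau)$ and $(\gamma_\tau)$. In fact your second step (the identity $\alpha^2-4\beta=-4\,[\Im Q_\tau(i\omega)]^2$ on $(\gamma_\tau)$, the monotonicity of $\beta_\tau$, the first contact with the parabola $\alpha^2=4\beta$ at $(2\mu_\tau,\mu_\tau^2)$ and the tangency of $(l_\tau)$ there) supplies precisely the ``simple analysis'' the paper asserts without detail, and the two obstacles you flag --- excluding all other imaginary-root arcs from $\partial S$ and the normal-form verification at the codimension-two corners --- are exactly the points the paper also leaves informal.
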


\begin{proof}

It is easy to see that for $\alpha=\beta=0$, the characteristic equation has negative real roots, and therefore, the equilibrium state $(u^\star, v^\star)$ of system \eqref{sys.wilson.cowan.dd} is asymptotically stable. As the roots of the characteristic function $\Delta(z)$ (or $\hat{\Delta}(z)$) continuously depend on the parameters $\alpha$ and $\beta$, the number of roots $z$ such that $\Re(z)>0$ may change if and only if a root $z=0$ or a pair of pure imaginary roots appear, i.e. along the curve $(\gamma_\tau)$ or the lines  $(l_n)$, {$n\in\{0,1,...,N-1\}$} in the $(\alpha,\beta)$-plane. A simple geometrical reasoning based on Proposition \ref{prop.lines.curve} shows that the line segments and curve segment given in the statement above enclose a connected region of the $(\alpha,\beta)$-plane containing the origin, i.e. the stability region $S_\tau(\alpha,\beta)$ of the equilibrium $(u^\star, v^\star)$ of system \eqref{sys.wilson.cowan.dd}. Moreover, Theorem \ref{thm.saddle.node} shows that a saddle-node bifurcation takes place along the line segment $(l_0)$, and hence, statement a. holds.

b. We will first show that Hopf bifurcations take place along the curve segment $(\gamma_\tau)$, with $\omega\in(0,\omega_\tau)$. 

Lemma \ref{lem.gamma} provides that the characteristic equation (\ref{eq.Q}) has a pair of pure imaginary roots $\pm i\omega$ if  $(\alpha,\beta)$ belong to the curve $(\gamma_\tau)$. Let us consider an arbitrary $\omega\in(0,\omega_\tau)$ and denote by $z(\alpha,\beta)$ the root of the characteristic equation (\ref{eq.Q}) satisfying $z(\alpha^*,\beta^*)=i\omega$ where $(\alpha^*,\beta^*)=(\alpha_\tau(\omega),\beta_\tau(\omega))\in (\gamma_\tau)$. Our aim is to prove the transversality condition:
$$\nabla_{\overline{u}} \Re(z)(\alpha^*,\beta^*)>0,$$
for any outward pointing vector $\overline{u}$ from the region $S_\tau(\alpha,\beta)$, i.e. $\langle \overline{u} ,\overline{n}\rangle>0$, where $\overline{n}$ denotes the outward pointing normal vector to the curve $(\gamma_\tau)$. 

From (\ref{eq.Q}) it follows that $z(\alpha,\beta)$ is a solution of the equation
\begin{equation}\label{eq.dem.Q}
Q_\tau^2(z)-\alpha Q_\tau(z)+\beta=0.
\end{equation}
Taking the derivative with respect to $\alpha$, we obtain:
$$2Q_\tau(z)Q_\tau'(z)\frac{\partial z}{\partial\alpha}-Q_\tau(z)-\alpha Q_\tau'(z)\frac{\partial z}{\partial\alpha}=0,$$
and we express:
\begin{equation}\label{eq.dz.da}
\frac{\partial z}{\partial\alpha}=\frac{Q_\tau(z)}{Q_\tau'(z)[2Q_\tau(z)-\alpha]}.\end{equation}
Therefore, based on the parametric equations of the curve $(\gamma_\tau)$ given by (\ref{sist_gamma}), we deduce:
$$
\frac{\partial z}{\partial\alpha}\bigg\rvert_{(\alpha^*,\beta^*)} =\frac{Q_\tau(i\omega)}{Q_\tau'(i\omega)[2Q_\tau(i\omega)-\alpha^*]}=\frac{Q_\tau(i\omega)}{Q_\tau'(i\omega)[2Q_\tau(i\omega)-2\Re(Q_\tau(i\omega))]}
=\frac{Q_\tau(i\omega)}{2Q_\tau'(i\omega)\cdot i\cdot\Im(Q_\tau(i\omega))}.$$
Applying the real part, we finally obtain:
$$\frac{\partial \Re(z)}{\partial\alpha}\bigg\rvert_{(\alpha^*,\beta^*)}=\Re\left[-\frac{i}{2}\cdot\frac{Q_\tau(i\omega)}{Q_\tau'(i\omega)\Im(Q_\tau(i\omega))}\right]=\frac{1}{2}\cdot \Im\left[\frac{Q_\tau(i\omega)}{Q_\tau'(i\omega)\Im(Q_\tau(i\omega))}\right]=\frac{1}{2\Im( Q_\tau(i\omega))}\cdot\Im\left[\frac{Q_\tau(i\omega)}{Q_\tau'(i\omega)}\right].
$$
Taking the derivative with respect to $\beta$ in equation (\ref{eq.dem.Q}), we obtain:
$$2Q_\tau(z)Q_\tau'(z)\frac{\partial z}{\partial\beta}-\alpha Q_\tau'(z)\frac{\partial z}{\partial\beta}+1=0,$$
and we express:
\begin{equation}\label{eq.dz.db}\frac{\partial z}{\partial\beta}=\frac{1}{Q_\tau'(z)[\alpha-2Q_\tau(z)]}.
\end{equation}
Again, using the expressions of the parametric curve $(\gamma_\tau)$ given by (\ref{sist_gamma}), we deduce:
$$\frac{\partial z}{\partial\beta}\bigg\rvert_{(\alpha^*,\beta^*)}=\frac{1}{2Q_\tau'(i\omega)[-i\Im( Q_\tau(i\omega))]}=\frac{i}{2Q_\tau'(i\omega)\Im( Q_\tau(i\omega))}.$$
Once again, applying the real part, we arrive at:
$$\frac{\partial\Re(z)}{\partial\beta}\bigg\rvert_{(\alpha^*,\beta^*)}=-\frac{1}{2}\cdot\Im\left[\frac{1}{Q_\tau'(i\omega)\Im(Q_\tau(i\omega))}\right]=-\frac{1}{2\Im(Q_\tau(i\omega))}\cdot\Im\left[\frac{1}{Q_\tau'(i\omega)}\right]$$
and thus, we obtain the gradient vector:
\begin{align*}\nabla \Re(z)(\alpha^*,\beta^*)&=\left(\frac{\partial\Re(z)}{\partial\alpha},\frac{\partial\Re(z)}{\partial\beta}\right)\bigg\rvert_{(\alpha^*,\beta^*)}=\frac{1}{2\Im(Q_\tau(i\omega))}
\left(\Im\Big[\frac{Q_\tau(i\omega)}{Q_\tau'(i\omega)}\Big],-\Im\left[\frac{1}{Q_\tau'(i\omega)}\right]\right)=\\
&=\frac{1}{2\Im(Q_\tau(i\omega))|Q_\tau'(i\omega)|^2}
\left(\Im\left[Q_\tau(i\omega)\overline{Q_\tau'(i\omega)}\right],\Im( Q_\tau'(i\omega))\right).
\end{align*}
The tangent vector to the curve $(\gamma_\tau)$ at the point $(\alpha^*,\beta^*)$ is $(\alpha'(\omega),\beta'(\omega))$, where
\begin{align*}
\alpha'(\omega)&=\frac{d}{d\omega}(2\Re(Q_\tau(i\omega)))=2\Re\left[\frac{d}{d\omega}Q_\tau(i\omega)\right]=2\Re(i\cdot Q_\tau'(i\omega))=-2\Im(Q_\tau'(i\omega))\\
\beta'(\omega)&=\frac{d}{d\omega}|Q_\tau(i\omega)|^2=\frac{d}{d\omega
}\Big[Q_\tau(i\omega)\overline{Q_\tau(i\omega)}\Big]=i\cdot Q_\tau'(i\omega)\overline{Q_\tau(i\omega)}+Q_\tau(i\omega)\overline{i\cdot Q_\tau'(i\omega)}=\\
&=i\cdot Q_\tau'(i\omega)\overline{Q_\tau(i\omega)}-i\cdot Q_\tau(i\omega)\overline{Q_\tau'(i\omega)}=i\cdot( Q_\tau'(i\omega)\overline{Q_\tau(i\omega)}-Q_\tau(i\omega)\overline{Q_\tau'(i\omega)})=\\
&=-2\Im\Big[Q_\tau'(i\omega)\overline{Q_\tau(i\omega)}\Big]=2\Im\left[Q_\tau(i\omega)\overline{Q_\tau'(i\omega)}\right].
\end{align*}
Thus, we obtain the following tangent vector to the curve $(\gamma_\tau)$: 
$$(\alpha'(\omega),\beta'(\omega))=2\left(-\Im(Q_\tau'(i\omega)),\Im\left[Q_\tau(i\omega)\overline{Q_\tau'(i\omega)}\right]\right).$$
Therefore, fixing the orientation of the curve $(\gamma_\tau)$ in the direction of increasing $\omega$, a right-pointing normal vector to the curve $(\gamma_\tau)$ is:
$$\overline{n}(\omega)=\left(\Im\Big[Q_\tau(i\omega)\overline{Q_\tau'(i\omega)}\Big],\Im(Q_\tau'(i\omega))\right).$$
Hence, the directional derivative is:
$$\nabla_{\overline{u}} \Re(z)(\alpha^*,\beta^*)=\left\langle\nabla \Re(z)(\alpha^*,\beta^*),\overline{u}\right\rangle=\langle\frac{\overline{n}(\omega)}{2\Im (Q_\tau(i\omega))|Q'_\tau(i\omega)|^2},\overline{u}\rangle
=\frac{{\tau\rho(\omega)}}{2q_\tau(\omega)|Q'_\tau(i\omega)|^2}\langle\overline{n}(\omega),\overline{u}\rangle>0,
$$
as { $\rho(\omega)>0$}, $\langle\overline{n}(\omega),\overline{u}\rangle>0$ and $q_\tau(\omega)>0$ for any $\omega\in(0,\omega_\tau)$, from the proof of Lemma \ref{lem.omega}. Therefore, the transversality condition holds, and it follows that a Hopf bifurcation takes place along the curve segment $(\gamma_\tau)$ which bounds the stability region $S_\tau(\alpha,\beta)$. 

{  In general, along other segments of the curve $(\gamma_\tau)$, it is important to emphasize that if $\omega\in(\omega_{n}(\tau),\omega_{n+1}(\tau))$, where $n\in\{0,1,...,N-1\}$ and  $z(\alpha,\beta)$ denotes the root of the characteristic equation (\ref{eq.Q}) satisfying $z(\alpha^*,\beta^*)=i\omega$ where $(\alpha^*,\beta^*)=(\alpha_\tau(\omega),\beta_\tau(\omega))\in (\gamma_\tau)$, by a similar reasoning as above, we get
$$\nabla_{\overline{u}} \Re(z)(\alpha^*,\beta^*)
=\frac{{\tau\rho(\omega)}}{2q_\tau(\omega)|Q'_\tau(i\omega)|^2}\langle\overline{n}(\omega),\overline{u}\rangle,
$$
where the orientation of the curve $(\gamma_\tau)$ is fixed in the direction of increasing $\omega$, and $\overline{n}(\omega)$ is a right-pointing normal vector to the curve $(\gamma_\tau)$ (e.g. see Figure \ref{fig.bif.curves}). From Lemma \ref{lem.omega}, it follows that $\text{sign}(q_\tau(\omega))=(-1)^n$ and we can also notice that $\text{sign}(\langle\overline{n}(\omega),\overline{u}\rangle)=(-1)^n$, for any vector $\overline{u}$ pointing towards the region above the curve segment of $(\gamma_\tau)$, with $\omega\in(\omega_{n}(\tau),\omega_{n+1}(\tau))$. Therefore, $\nabla_{\overline{u}} \Re(z)(\alpha^*,\beta^*)>0$, and hence,  stability cannot be regained when crossing the curve segment from the connected region below the curve segment to the region above.
}
\begin{figure}[ht]
    \centering
	\includegraphics[width=0.5\linewidth]{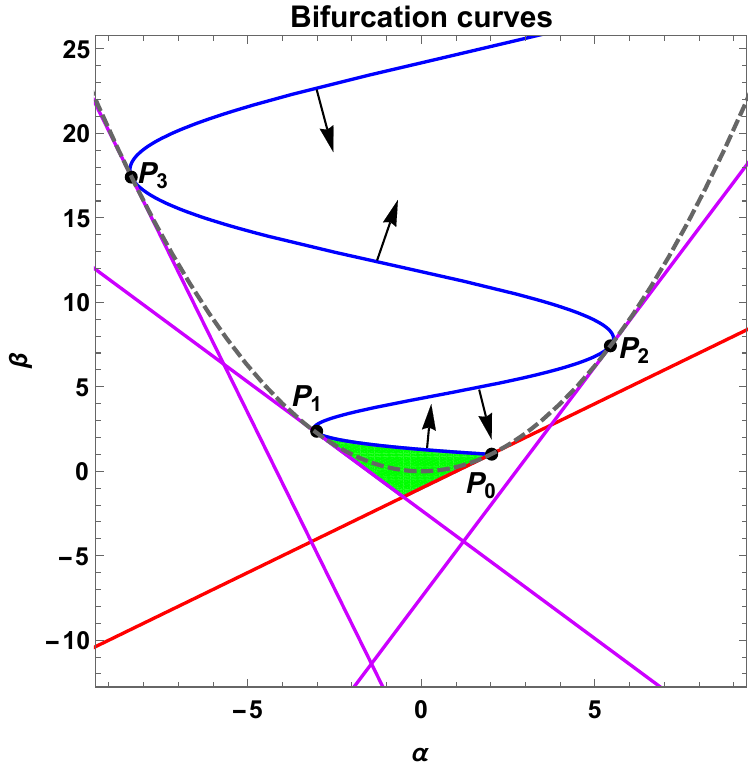}
	\vspace*{-0.5cm}
	\caption{{Bifurcation curves exemplified for the Dirac kernel case with $\tau = 2$: $(\gamma_\tau)$ is the blue curve, $(l_0)$ is the red line, $(l_1)$, $(l_2)$ and  $(l_3)$ are the purple lines,  the parabola $\alpha^2=4\beta$ is the gray dotted curve. The stability region is plotted in green. The black arrows represent right-pointing normal vectors to the $(\gamma_\tau)$-curve, where the orientation of the curve is fixed in the direction of increasing $\omega$. The intersection points $P_n$ are given by Proposition \ref{prop.lines.curve}.}}
	\label{fig.bif.curves}
\end{figure}

In the following, we will show that Hopf bifurcations take place along the line segment $(l_\tau)$, with $\alpha\in\left[2\mu_\tau,1+\mu_\tau\right]$. If $(\alpha,\beta)$ belong to this line segment, Lemma \ref{lem.ltau} shows that the characteristic equation (\ref{eq.Q}) has a pair of pure imaginary roots $\pm i\omega_\tau$. Let us denote by $z(\alpha,\beta)$ the root of the characteristic equation (\ref{eq.Q}) with the property $z(\alpha^*,\beta^*)=i\omega_\tau$ where $\ds\alpha^*\in\left[2\mu_\tau,1+\mu_\tau\right]$ is arbitrarily fixed and $\ds\beta^*=\mu_\tau(\alpha-\mu_\tau)$. We will prove the transversality condition
$$\nabla_{\overline{u}} \Re(z)(\alpha^*,\beta^*)>0,$$
for any vector $\overline{u}$ pointing outward from the region $S_\tau(\alpha,\beta)$, i.e. $\langle \overline{u} ,\overline{n}\rangle>0$, where $\ds\overline{n}=\left(\mu_\tau,-1\right)$ is an outward pointing normal vector to the line segment $(l_\tau)$.

Similarly as in (\ref{eq.dz.da}) and (\ref{eq.dz.db}) we have:
$$\frac{\partial z}{\partial\alpha}=\frac{Q_\tau(z)}{Q_\tau'(z)[2Q_\tau(z)-\alpha]}\qquad\text{and}\qquad \frac{\partial z}{\partial\beta}=\frac{1}{Q_\tau'(z)[\alpha-2Q_\tau(z)]}.$$
Using the fact that, $\ds Q_\tau(z)=\frac{z+\tau}{\tau \hat{H}(z)}$, we deduce:
$$
Q'_\tau(z)=\frac{\hat{H}(z)-(z+\tau)\hat{H}'(z)}{\tau\hat{H}^2(z)}=\frac{1}{\tau\hat{H}(z)}-\frac{z+\tau}{\tau\hat{H}(z)}\cdot\frac{\hat{H}'(z)}{\hat{H}(z)}=\frac{1-\tau Q_\tau(z)\hat{H}'(z)}{\tau\hat{H}(z)}
$$
and therefore, using the fact that $Q_\tau(i\omega_\tau)=\mu_\tau$ (as in the proof of Lemma \ref{lem.ltau}), we obtain
$$
Q'_\tau(i\omega_\tau)=\frac{1-\tau\mu_\tau e^{-i\theta(\omega_\tau)}\left[-i\rho'(\omega_\tau)-\rho(\omega_\tau)\theta'(\omega_\tau)\right]}{\tau\rho(\omega_\tau)e^{-i\theta(\omega_\tau)}}=\frac{e^{i\theta(\omega_\tau)}+\tau\mu_\tau[i\rho'(\omega_\tau)+\rho(\omega_\tau)\theta'(\omega_\tau)]}{\tau\rho(\omega_\tau)}.
$$
Hence, it is easy to see that:
$$\Re \left[Q'_\tau(i\omega_\tau)\right]=\frac{\cos\theta(\omega_\tau)+\tau\mu_\tau\rho(\omega_\tau)\theta'(\omega_\tau)}{\tau\rho(\omega_\tau)}=\mu_\tau(\tau^{-1}\cos^2\theta(\omega_\tau)+\theta'(\omega_\tau))<0,
$$
as $\mu_\tau<0$. The following gradient vector is obtained:
\begin{align*}
\nabla\Re(z)(\alpha^*,\beta^*)&=\Re\left[\left(\frac{Q_\tau(i\omega_\tau)}{Q_\tau'(i\omega_\tau)[2Q_\tau(i\omega_\tau)-\alpha^*]},\frac{1}{Q_\tau'(i\omega_\tau)[\alpha^*-2Q_\tau(i\omega_\tau)]}\right)\right]=\\
&=\frac{1}{\left(2\mu_\tau-\alpha^*\right)}\cdot\Re\left[\left(\frac{\mu_\tau}{Q'_\tau(i\omega_\tau)},-\frac{1}{Q'_\tau(i\omega_\tau)}\right)\right]=
\\&=\frac{1}{\left(2\mu_\tau-\alpha^*\right)}\cdot\Re\left(\frac{1}{Q'_\tau(i\omega_\tau)}\right)\cdot(\mu_\tau,-1)=
\\&=\frac{\Re\left(Q'_\tau(i\omega_\tau)\right)}{\left(2\mu_\tau-\alpha^*\right)|Q'_\tau(i\omega)|^2}\cdot\overline{n}.
\end{align*}
As the scalar term appearing in front of $\overline{n}$ is positive, it follows that the gradient vector is indeed a normal vector to the line $(l_\tau)$ pointing outward from the stability region $S_\tau(\alpha,\beta)$. In conclusion, the transversality condition is now deduced as in the case of the curve segment $(\gamma_\tau)$, and it follows that a Hopf bifurcation takes place along the line segment $(l_\tau)$. { We also emphasize that the transversality condition may be checked in a similar way as before along each line $(l_n)$, $n\in\{1,2,...,N-1\}$.}

The points c., d. and e. from the statement of the theorem follow easily taking into consideration the intersections between the lines $(l_0)$, $(l_\tau)$ and the curve $(\gamma_\tau)$. 
\end{proof}

\begin{exmp}\label{ex.dirac}
If a \textbf{Dirac kernel} $h(t)=\delta(t-\tau)$ is considered in \eqref{sys.wilson.cowan.dd}, we have $\rho(\omega)=1$ and $\theta(\omega)=\omega$ and it can be shown that $S_{\tau_2}(\alpha,\beta)\subset S_{\tau_1}(\alpha,\beta)$, for any $\tau_1<\tau_2$. Hence, the intersection of all stability regions in this case will be: \[S_{\infty}(\alpha,\beta)=\left\{(\alpha,\beta)\in\mathbb{R}^2:|\alpha|-1<\beta< 1\right\},\]
which is shown in the first panel of Fig. \ref{fig.stab.dom} as the shaded triangle.
\end{exmp}

\begin{exmp}\label{ex.strong.gamma}
If a \textbf{strong Gamma kernel} $h(t)=4\tau^{-2}t\exp(-2t/\tau)$ is considered in system \eqref{sys.wilson.cowan.dd}, as $\rho(\omega)=4/(4+\omega^2)$ and $\theta(\omega)=2\arctan(\omega/2)$, 
the explicit equation of the curve $(\gamma_\tau)$ is: \begin{equation}\label{curve.gamma.strong}
(\gamma_\tau):\quad \beta=\frac{\left[4(\tau+2)-\alpha\tau\right]^2 \left[(\tau+2)^2-2 \alpha\right]}{4 \tau (\tau+4)^3}\quad,~\text{for}~
\alpha<2.   
\end{equation}
Moreover, as $\mu_\tau=-(\tau+2)^2/\tau$, the equation of the line $(l_\tau)$ is
\begin{equation}\label{line.gamma.strong}
(l_\tau):\quad \beta=-\frac{(\tau+2)^2}{\tau} \left(\alpha+\frac{(\tau+2)^2}{\tau}\right).  
\end{equation}
Therefore, the stability region given by Theorem \ref{thm.stab.dom.bounded} is
\[S_\tau(\alpha,\beta)=\left\{(\alpha,\beta)\in\mathbb{R}^2:\beta>-\frac{(\tau+2)^2}{\tau} \left(\alpha+\frac{(\tau+2)^2}{\tau}\right)~\text{and }~\alpha-1<\beta<\frac{\left[4(\tau+2)-\alpha\tau\right]^2 \left[(\tau+2)^2-2 \alpha\right]}{4 \tau (\tau+4)^3}\right\}.\]
Using basic inequality techniques, it can be shown that $S_2(\alpha,\beta)\subset S_\tau(\alpha,\beta)$, for any $\tau>0$, where: 
\[S_2(\alpha,\beta)=\left\{(\alpha,\beta)\in\mathbb{R}^2:\beta>-8 \left(\alpha+8\right)~\text{and }~\alpha-1<\beta<\left(\frac{8-\alpha}{6}\right)^3\right\}.\]
In conclusion, in the case of a strong Gamma kernel, $S_2(\alpha,\beta)$ (see third panel of Fig. \ref{fig.stab.dom}) represents the intersection of all stability regions $S_\tau(\alpha,\beta)$, for any $\tau>0$. 
\end{exmp}

The following result follows similarly in the case when equation (\ref{eq.omega}) does not admit any positive roots. In this case, the stability region $S_\tau(\alpha,\beta)$ will be unbounded. 

\begin{thm}\label{thm.stab.dom.unbounded}
If equation (\ref{eq.omega}) does not admit any strictly positive real root (i.e. $N=1$), the boundary of the stability region $S_\tau(\alpha,\beta)$ of the equilibrium state $(u^\star, v^\star)$ of system \eqref{sys.wilson.cowan.dd} is given by the union of the half-line and curve given below:
\begin{align*}
(l_0):&\quad \beta=\alpha-1&, ~\alpha\in (-\infty,2];\\
(\gamma_\tau):&\quad \left\{
  \begin{array}{l}
    \ds\alpha =\displaystyle\frac{2}{\rho(\omega)}\left[\cos\theta(\omega)-\frac{\omega}{\tau}\sin\theta(\omega)\right]\\
    \ds\beta =\displaystyle\frac{1}{\rho^2(\omega)}\left(1+\frac{\omega^2}{\tau^2}\right)
  \end{array}
\right.&,~\omega>0.
\end{align*}
At the boundary of the stability region $S_\tau(\alpha,\beta)$, the following bifurcation phenomena take place in a neighborhood of the equilibrium $(u^\star, v^\star)$ of system \eqref{sys.wilson.cowan.dd}:
\begin{itemize}
\item[a.] Saddle-node bifurcations take place along the open half-line $(l_0)$;
\item[b.] Hopf bifurcations take place along the curve $(\gamma_\tau)$;
\item[c.] Bogdanov-Takens bifurcation at $(\alpha,\beta)=(2,1)$.
\end{itemize}
\end{thm}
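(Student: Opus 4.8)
The plan is to run the argument of the proof of Theorem~\ref{thm.stab.dom.bounded} essentially unchanged, observing that the hypothesis that \eqref{eq.omega} has no positive real root is exactly what deletes the segment $(l_\tau)$ from the boundary and lets the curve $(\gamma_\tau)$ be traced out over all of $\omega>0$. The one preliminary fact I would isolate is the sign of $\Im(Q_\tau(i\omega))$: from the formula derived in the proof of Lemma~\ref{lem.gamma}, $\Im(Q_\tau(i\omega))=\bigl(\omega\cos\theta(\omega)+\tau\sin\theta(\omega)\bigr)/\bigl(\tau\rho(\omega)\bigr)$, and as $\omega\to 0^+$ the numerator behaves like $\omega+\tau\theta(\omega)>0$ (since $\theta(0)=0$, $\theta'>0$); by hypothesis this numerator never vanishes on $(0,\infty)$, so by continuity $\Im(Q_\tau(i\omega))>0$ for every $\omega>0$.

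Combining this with Lemma~\ref{lem.ltau}, the characteristic equation \eqref{eq.char} admits no purely imaginary roots in the regime $\alpha^2-4\beta\ge 0$; hence, exactly as in the bounded case, the only curves across which the number of roots with $\Re z>0$ can change are the saddle-node line $(l_0)$: $\beta=\alpha-1$ (from a zero root) and the curve $(\gamma_\tau)$ of Lemma~\ref{lem.gamma} (from a conjugate pair), the latter now parametrised by all $\omega>0$. Since $(\alpha,\beta)=(0,0)$ is asymptotically stable (there $\Delta(z)=(z+1)^2$) and the roots depend continuously on $(\alpha,\beta)$, a direct inspection shows that $(l_0)$ and $(\gamma_\tau)$ enclose a single connected region of the $(\alpha,\beta)$-plane containing the origin, which is the stability region $S(\alpha,\beta)$; it is unbounded because $\beta_\tau(\omega)\ge 1+\omega^2/\tau^2\to\infty$ as $\omega\to\infty$ while $(l_0)$ stretches to $\alpha\to-\infty$. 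Statement a.\ is then immediate from Theorem~\ref{thm.saddle.node}, which applies along the open half-line $(l_0)$ because $\alpha\ne 2$ there.

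For statement b., the transversality computation carried out for $(\gamma_\tau)$ in the proof of Theorem~\ref{thm.stab.dom.bounded} goes through verbatim: it yields $\nabla\Re(z)(\alpha^*,\beta^*)=\dfrac{1}{2\Im(Q_\tau(i\omega))\,|Q_\tau'(i\omega)|^2}\,\overline{n}(\omega)$, where $\overline{n}(\omega)$ is an outward-pointing normal to $(\gamma_\tau)$, and the scalar prefactor is positive precisely because $\Im(Q_\tau(i\omega))>0$, which we have now established for every $\omega>0$ rather than only on $(0,\omega_\tau)$. Thus a Hopf bifurcation occurs at each point of $(\gamma_\tau)$. For statement c., note that $(l_0)$ and $(\gamma_\tau)$ meet only at $(\alpha,\beta)=(2,1)$ (the $\omega\to 0^+$ limit of $(\gamma_\tau)$, while $\beta_\tau(\omega)>1>\alpha-1$ for $\omega>0$, $\alpha<2$); there $\Delta(0)=1-\alpha+\beta=0$ and $\Delta'(0)=2-\alpha+\alpha\tau-2\beta\tau=0$, whereas $\Delta''(0)=2(1+\tau)^2\ne 0$, so $z=0$ is exactly a double root, giving a Bogdanov--Takens bifurcation at the common endpoint of the saddle-node and Hopf boundaries.

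The step I expect to be the main obstacle is the topological one: showing that $(l_0)$ and $(\gamma_\tau)$ truly bound one connected, unbounded region containing the origin even when $\alpha_\tau(\omega)$ is not assumed monotone in $\omega$, so that $(\gamma_\tau)$ could a priori fold back on itself. This is dealt with as the ``simple analysis'' in the proof of Theorem~\ref{thm.stab.dom.bounded}: any such fold remains in the half-plane $\beta\ge 1$, which is disjoint from the part of $(l_0)$ bounding the origin's component ($\beta=\alpha-1<1$ for $\alpha<2$), so it cannot disconnect that component or spoil its unboundedness; and Theorem~\ref{thm.stab}(3) confirms that the region immediately below $(l_0)$ is unstable, so the whole half-line $(l_0)$, $\alpha\le 2$, genuinely lies on $\partial S(\alpha,\beta)$.
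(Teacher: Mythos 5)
Your proposal is correct and follows exactly the route the paper intends: the paper gives no separate proof of this theorem, stating only that it "follows similarly" to Theorem~\ref{thm.stab.dom.bounded}, and your argument is precisely that adaptation, with the key observation that the absence of positive roots of \eqref{eq.omega} forces $\Im(Q_\tau(i\omega))>0$ for all $\omega>0$, removing $(l_\tau)$ and extending the Hopf curve and transversality computation to all $\omega>0$. Your added checks (the double root at $(2,1)$ via $\Delta''(0)=2(1+\tau)^2$, and the non-intersection of $(\gamma_\tau)$ with $(l_0)$ away from $(2,1)$) are consistent with, and somewhat more detailed than, what the paper records.
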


\begin{exmp}\label{ex.weak.gamma}
If a \textbf{weak Gamma kernel} $h(t)=\tau^{-1}\exp(-t/\tau)$ is considered in system \eqref{sys.wilson.cowan.dd}, as $\rho(\omega)=(1+\omega^2)^{-1/2}$ and $\theta(\omega)=\arctan\omega$, it can be easily seen that equation  \eqref{eq.omega} does not have any positive roots. The explicit equation of the curve $(\gamma_\tau)$ (which is in fact an arc of a parabola) is: \begin{equation}\label{curve.gamma.weak}
(\gamma_\tau):\quad \beta=\left(1-\frac{\alpha}{2}\right)^2+\left(\tau+\frac{1}{\tau}\right)\left(1-\frac{\alpha}{2}\right)+1,\quad\text{for }\alpha<2.   
\end{equation}
Therefore, the stability region given by Theorem \ref{thm.stab.dom.unbounded} is
\[S_\tau(\alpha,\beta)=\left\{(\alpha,\beta)\in\mathbb{R}^2:\alpha<2\text{ and }\alpha-1<\beta<\left(1-\frac{\alpha}{2}\right)^2+\left(\tau+\frac{1}{\tau}\right)\left(1-\frac{\alpha}{2}\right)+1 \right\}.\]
As $\tau+\dfrac{1}{\tau}\geq 2$, it follows that $S_1(\alpha,\beta)\subset S_\tau(\alpha,\beta)$, for any $\tau>0$. Hence, in this case, $S_1(\alpha,\beta)$ (shown in the second panel of Fig. \ref{fig.stab.dom}) represents the intersection of all stability regions $S_\tau(\alpha,\beta)$, for all $\tau>0$. 
\end{exmp}

\begin{figure}[http]
	\centering
	\includegraphics[width=\linewidth]{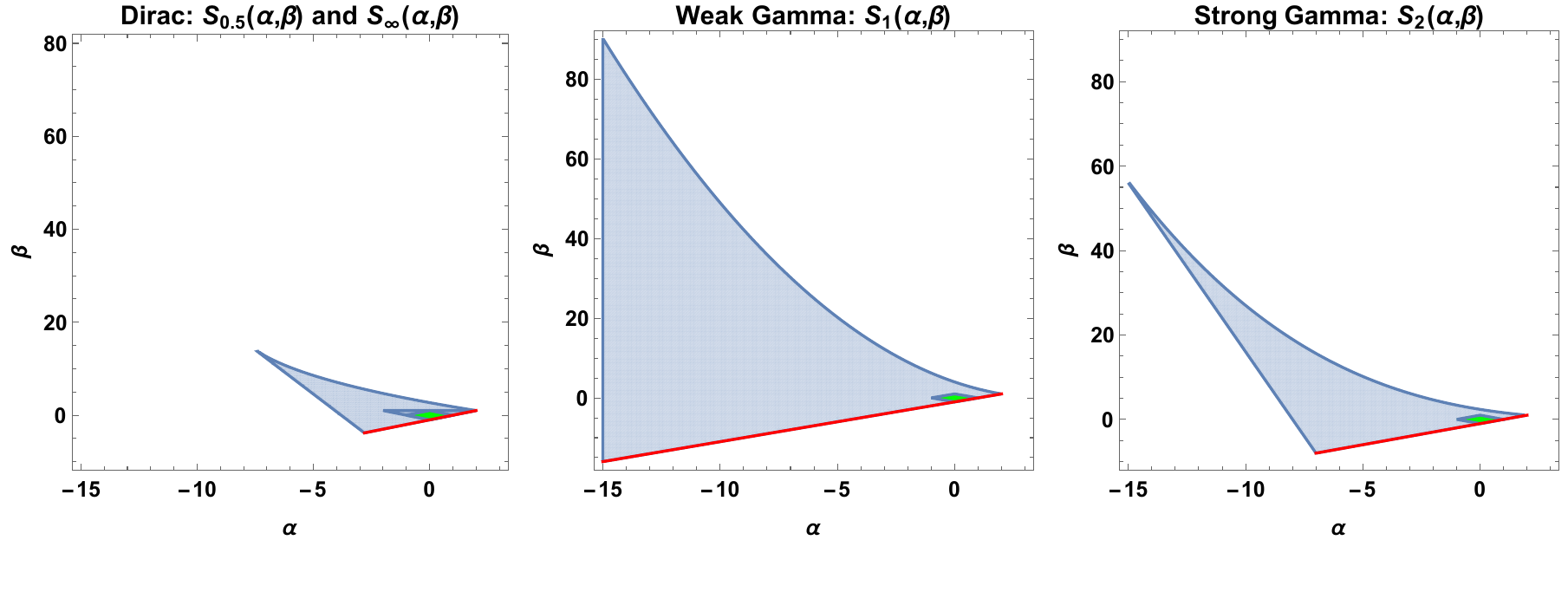}
	\caption{Stability regions
	for different types of delay kernels obtained by Theorems  \ref{thm.stab.dom.bounded} and \ref{thm.stab.dom.unbounded}. The green region represents a delay-kernel-invariant subset of $S_\tau(\alpha,\beta)$ provided by Theorem \ref{thm.stab}. Hopf bifurcations occur along the blue curves and line segments, while the red line corresponds to saddle-node bifurcations.}
	\label{fig.stab.dom}
\end{figure}

{ Comparable stability characterisations have been obtained for networks of identical neurons in \cite{Jessop_2010}, without specifically focusing on the occurrence of bifurcation phenomena.  However, in this previous work, the stability region is characterized in terms of the eigenvalues of the network's connection matrix. While our results from Theorems \ref{thm.stab.dom.bounded} and \ref{thm.stab.dom.unbounded} can be easily translated in similar terms, for our two-dimensional Wilson-Cowan model it seems to be more convenient to use $\alpha$ and $\beta$ as characteristic parameters, allowing us to directly discuss the combined effect of the coupling coefficients and the average time delay on the stability regime of the considered model, as described below.}

While Theorem~\ref{thm.stab} guarantees that there is a {\it kernel-invariant} stability region common to a large class of delay distributions (including all Dirac and Gamma kernels), Figure~\ref{fig.stab.dom} shows that the size of the whole stability region varies widely around this region, depending on the kernel type. 
As an illustration of Examples \ref{ex.dirac}, \ref{ex.strong.gamma} and \ref{ex.weak.gamma}, the panels of Figure \ref{fig.stab.dom} show respectively the intersection of stability regions (over all $\tau>0$) found theoretically for the  Dirac kernel (dark triangle $S_{\infty}(\alpha,\beta)$ in left panel), the weak Gamma kernel (shaded region $S_1(\alpha,\beta)$ in the center panel) and for the strong Gamma kernel (shaded region $S_2(\alpha,\beta)$ in the right panel). It is then easy to see that the Gamma kernels support a larger subset of parameters for which the system converges to a stable equilibrium than the Dirac kernel. The weak Gamma kernel in particular produces an unbounded stability region (according to Theorem \ref{thm.stab.dom.unbounded}). Since the subsequent analysis in the original Wilson-Cowan model  was done for a weak Gamma kernel~\cite{Wilson-Cowan}, one interesting avenue for future work would be to compare the behaviors between the model with Gamma kernel as it was adapted in the original paper (with coarse-grain approximation), and in its unaltered form (without the coarse-grain approximation).

Our results also reveal that, for each kernel type, there are multiple mechanisms (relying on changes to both the parameters $(\alpha,\beta)$ and to the delay $\tau$) that may be used by the system to move from a regime with a unique stable equilibrium (steady firing activity) to a regime of stable oscillations. A transition based solely on altering the integration parameters $(\alpha,\beta)$ is more easily accomplished for the Dirac kernel than for the strong Gamma kernel, since the latter has a significantly larger stability region. For example, smaller perturbation needed to be applied to $(\alpha,\beta) = (0,0)$ (which is in the kernel-invariant stability region) in order to produce oscillations for the Dirac kernel versus the strong Gamma kernel. Moreover, notice that in both the case of the Dirac and the strong Gamma kernel, the transition in and out of the stability region may be promoted by either an increase in $\beta$, or by a decrease in $\alpha$. In contrast, in the case of the weak Gamma kernel (for which the stability region is unbounded), changing $\beta$ does not alter the stability of the equilibrium (i.e., does not transition the system into { and oscillatory} regime) if $\alpha$ is negative and sufficiently large in absolute value.

Physiologically, these paths can be tied back to changes in the coupling weights $a,b,c,d$, since $\alpha=af_1'(\theta_u+au^\star+bv^\star)+df_2'(\theta_v+cu^\star+dv^\star)$ and $\beta =(ad-bc)f_1'(\theta_u+au^\star+bv^\star)f_2'(\theta_v+cu^\star+dv^\star)$. While the dependence of $(\alpha,\beta)$ on $(a,b,c,d)$ is too complex to draw any direct interpretations, increasing the cross-population coupling coefficients $b$ and $c$ may be  way of lowering $\beta$ values, while self-coupling coefficients $a$ and $d$ seems to promote increases in both $\alpha$ and $\beta$. It can then be speculated that, in the case of Dirac and strong Gamma kernels, transitions in and out of { oscillations} can be accomplished by changing either self-coupling or inter-coupling strength (to cross either one of the Hopf curves delimiting { stable state from oscillatory} regimes). More explicit relationships can be worked out between the computational parameters $(\alpha,\beta)$ and the connectivity weights $(a,b,c,d)$ under specific modeling assumptions for the integrating functions $f_1$ and $f_2$, the type of kernel and distributed delay $\tau$. To illustrate this possibility, we explored numerically a particular theoretical example (in Section~\ref{application1}), and a neurophysiology-driven application to dynamic behavior in the basal ganglia (in Section~\ref{application2}).

More generally, our theoretical work suggests that different types of delay distributions may be optimal for different dynamic behaviors and transitions between them, and supports the relatively unexplored possibility that the delay kernel may act as a potential specialization mechanism of the cell to perform circuit-specific and task-specific functions. For example, functions which generally require constant firing rates may be easily performed by neural populations specialized in weak Gamma integration of inputs. Functions that require increased flexibility in and out of { oscillations} may be best accomplished by strong Gamma integrators. Functions characterized by transitions between different and more complex { oscillatory patterns} may require a Dirac cell specialization (as additionally illustrated by our simulations in the next section). This will be further discussed and contextualized in the Discussion section.

\color{black}

\section{Applications and numerical simulations}

In our previous numerical work on the model~\cite{kaslik2020wilson}, we showed how periodic behaviors, but also quasi-periodic and chaotic patterns are also possible in the $(u,v)$-phase-plane. In this study, we aim to further illustrate how access to these behaviors, as well as the transitions between them, are governed not only by the coupling strengths and by the delay $\tau$, but also by the shape of the delay distribution. { To this end, we performed numerical simulations, using our in house codes written in Wolfram Mathematica~\cite{Mathematica}, and made available on github~\cite{github}.} 

\subsection{Simulations in a theoretical context}
\label{application1}

In this section, a brief numerical experiment aims to simulate the kinetic details in a series of behaviors that were classified directly by our theoretical results. To facilitate comparison with the original reference, we considered the same activation functions $f_1(x) = f_2(x) = f(x)=(1+\exp(-\delta x))^{-1}$ and input drives $\theta_u=0.1$, $\theta_v=0.2$. The coupling parameters in our current simulation are fixed to new values $a=d=-19$, $b=c=10$, and $\delta=10$, corresponding overall to new characteristic parameters $\alpha=-17.8796$ and $\beta=57.7268$. 

{ As $\alpha^2>4\beta$ and $\beta>\alpha-1$ from the theoretical analysis presented in the previous section, it follows that a Hopf bifurcation occurs only in the conditions of Theorem \ref{thm.stab.dom.bounded}, when $(\alpha,\beta)\in (l_\tau)$. To find the critical value of the time delay $\tau$ for a particular delay kernel, we proceed as follows, according to Theorem \ref{thm.stab.dom.bounded}: 
\begin{itemize}
    \item find the roots $\mu$ of the quadratic equation $\beta=\mu(\alpha-\mu)$ in the interval $(-\infty,-1)$;
    \item for each $\mu$ computed at the previous step, numerically compute the root $\omega>0$ of the equation $\rho(\omega)\cos\theta(\omega)=\mu^{-1}$ such that $\theta(\omega)\in(\pi/2,\pi)$, and retain the smallest such value $\omega^*$; 
    \item the critical value of the average time delay is $\tau^*=-\omega^*\cot\theta(\omega^*)$.
\end{itemize}
It is also important to note that the frequency of oscillations at the Hopf bifurcation which occurs at the critical value $\tau^*$ of the average time delay is computed according to the formula $\dfrac{\omega^*}{2\pi\tau^*}$, based on the fact that, according to our theoretical results, $\pm i\omega^*/\tau^*$ are the critical roots of the characteristic function $\Delta(z)$.
}

While for these fixed parameter values the system always has a steady state at $(u^\star,v^\star)=(0.0478985, 0.0511112)$, the stability of this equilibrium changes when the time delay $\tau$ is increased past a supercritical Hopf bifurcation value $\tau = \tau^*$, with the creation of a stable limit cycle (that reflects onset of { oscillations} in the corresponding cells). The position of $\tau^*$, as well as the geometry, duty cycle and evolution of the oscillation born through this bifurcation further depend on the type of kernel used (as shown in Figure~\ref{fig.simulations}). For example, based on Theorem \ref{thm.stab.dom.bounded}, the critical value is $\tau^\star_0=0.120766$ for the Dirac kernel, $\tau^\star_2=0.433992$ for the strong Gamma kernel.  { The frequencies of oscillations occurring due to the Hopf bifurcation at the critical values of the average time delay are $2.16675$ (for the Dirac kernel) and $0.87829$ (for the strong Gamma kernel), respectively (see Fig. \ref{fig.simulations}).
As for the values of $\alpha$ and $\beta$ given above, we have $\alpha^2>4\beta$ and $\beta>\alpha-1$, Theorem \ref{thm.stab.dom.unbounded} guarantees that, for the specific case of a weak Gamma kernel}, a Hopf bifurcation never occurs for any $\tau>0$ (the equilibrium  $(u^\star,v^\star)$  stays locally asymptotically stable). Hence, no oscillations are expected to occur in a neighborhood of the equilibrium if a weak Gamma kernel is considered in the mathematical model. 

This reflects the potential importance of using different delay kernel types when modeling coupled population activity, since different delay distribution profiles may promote different behaviors. For example, our results suggest that the Dirac and strong Gamma kernels are more conducive of { oscillations} in the mean field firing rates, while the weak Gamma kernel promotes steady firing rates. Moreover, our numerical simulations also revealed cascades of bifurcations between periodic and quasi-periodic oscillations in conjunction with a discrete time delay (see Fig. \ref{fig.sim.dirac.and.Sgamma.pp}), which could not be observed in the case of strong Gamma kernels with the same system parameters. 
 This suggests that the limiting case of discretely distributed delays, while likely the most delicate in terms of a physiological implementation, may also offer the optimal physiological substrate for the complex { aperiodic oscillations} associated to task-specific functions in certain brain areas. To further investigate this possibility, we considered a specific application of this theoretical framework to a simplified model of the basal ganglia circuit involved in Parkinson's Disease.

\begin{figure}[http]
\centering
\begin{minipage}[c]{0.48\linewidth}
	\centering
	\includegraphics[width=\linewidth]{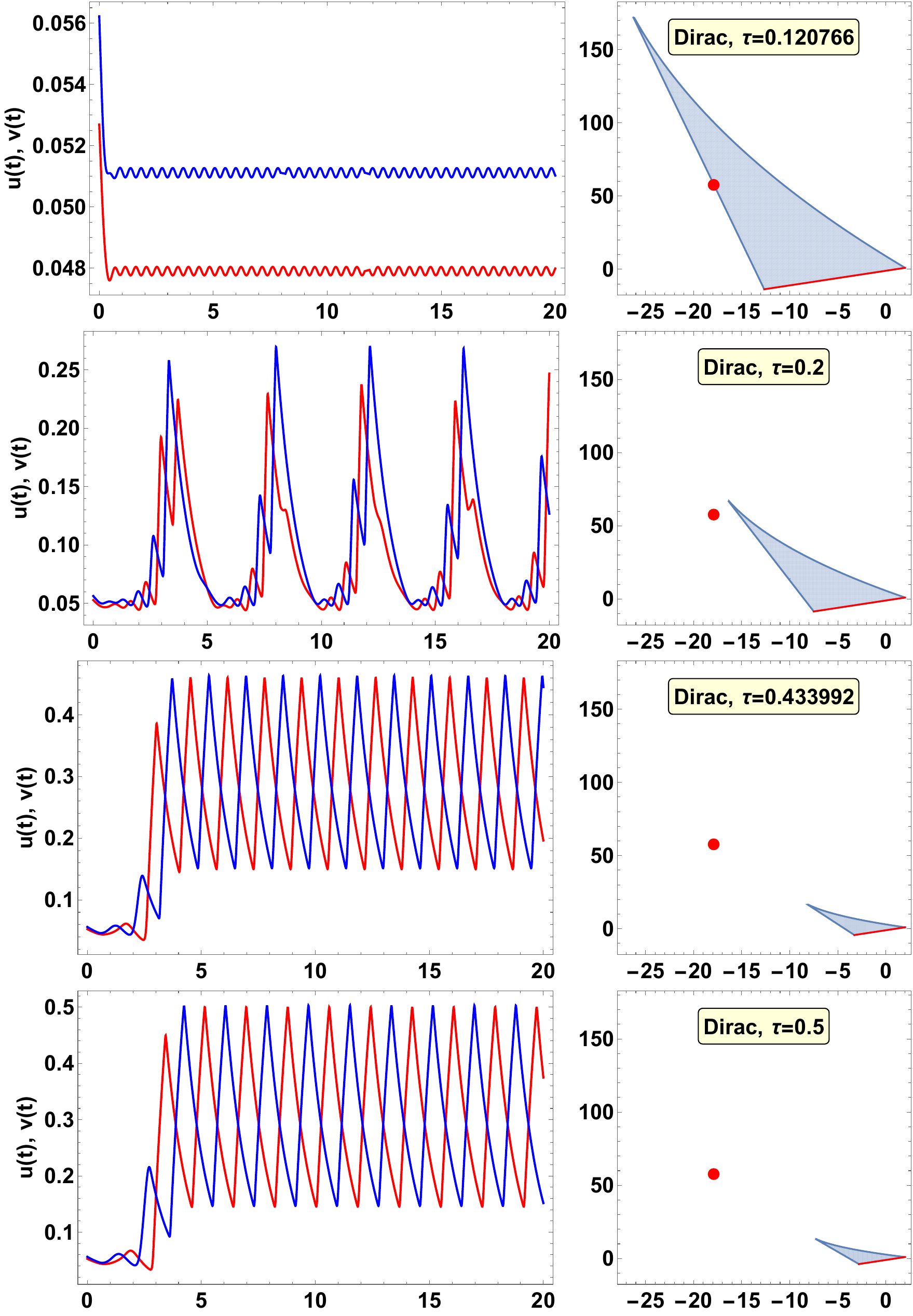}
\end{minipage}
\hspace*{0.02\linewidth}
\begin{minipage}[c]{0.48\linewidth}
\centering
  	\includegraphics[width=\linewidth]{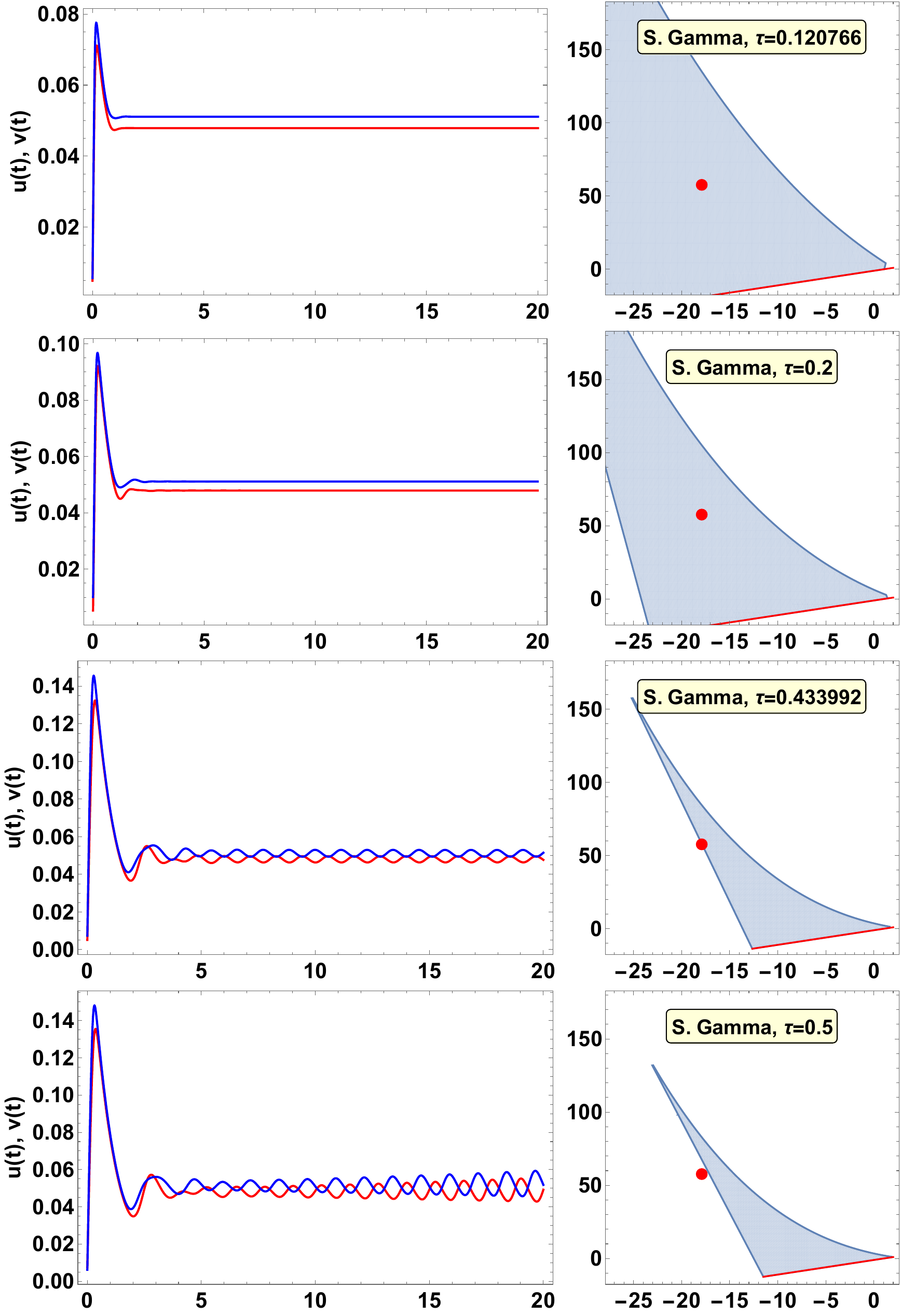}
\end{minipage}
	\caption{Evolution of state variables $u(t)$ and $v(t)$ of system \eqref{sys.wilson.cowan.dd} with discrete time-delay (left) and strong Gamma kernel (right) for values of the average time delay $\tau\in\{\tau_0^*,0.2,\tau_2^*,0.47\}$ (top to bottom). The values of the parameters are fixed: $\theta_u=0.1$, $\theta_v=0.2$, $a=d=-19$, $b=c=10$ and $\delta=10$. In this case, the equilibrium is $(u^\star,v^\star)=(0.0478985, 0.0511112)$ and the characteristic parameters are $\alpha=-17.8796$ and $\beta=57.7268$.  The same initial condition has been chosen in a neighborhood of the equilibrium. The critical value of $\tau$ in the Dirac kernel case is $\tau_0^* = 0.120766$ and in the strong Gamma kernel case is $\tau_2^* = 0.433992$. { The relative position of the point $(\alpha,\beta)$ with respect to the corresponding stability region is indicated.}}
	\label{fig.simulations}
\end{figure}

\begin{figure}[http]
	\centering
	\includegraphics[width=\linewidth]{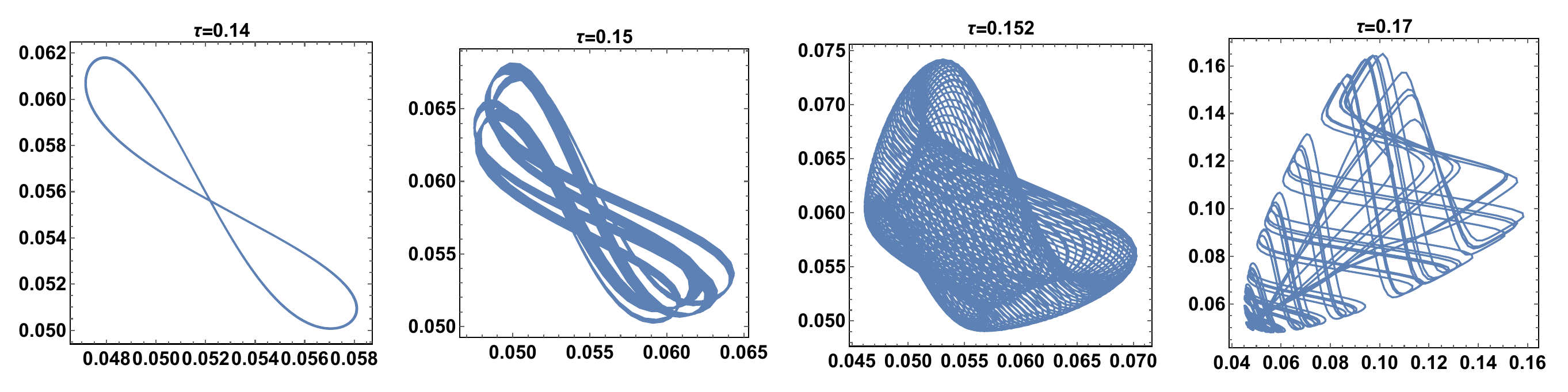}
	\caption{Periodic, quasi-periodic and chaotic orbits shown in the $(u,v)$-phase-plane for the Wilson-Cowan model with discrete time-delay, obtained for different values of $\tau$.}
	\label{fig.sim.dirac.and.Sgamma.pp}
\end{figure}

\subsection{Applications to the Parkinson's Disease}
\label{application2}

{ We will now use this theoretical layout to contextualize an existing computational model (based upon anatomical and electrophysiological research). This was developed by Holgado et al.~\cite{holgado2010} to describe an application to the subthalamic nucleus (STN) - globus pallidus (GPe) network involved in Parkinson's Disease:}
\begin{equation}\label{sys.holgado}
\left\{\begin{array}{l}
\ds\tau_S \dot{STN}(t)=-STN(t)+F_S\left[-w_{GS}GP(t-\Delta t_{GS}) + w_{CS} Ctx\right]\\
\\
\ds\tau_G \dot{GP}(t)=-GP(t)+F_G\left[w_{SG}STN(t-\Delta t_{SG}) - w_{GG}GP(t-\Delta t_{GG}) - w_{XG}Str\right]\\
\end{array}\right.
\end{equation}
where $STN$ and $GP$ represent the firing rates of the subthalamic nucleus and globus pallidus, respectively; $Ctx$, $Str$ are cortex, and striatum constant inputs; $\tau_S$, $\tau_G$ are time constants associated with the $STN$, $GP$ populations; $\Delta t_{MN}$ are trasmission delays from population $M$ to population $N$ and $F_S(\cdot)$ and $F_G(\cdot)$ are activation functions for each population. The parameter values used in \cite{holgado2010} are given in Table \ref{synaptic.weights}. The synaptic connection weight parameters ($w_{SS}, w_{GS}, w_{SG}, w_{GG}$) have been  estimated both in a healthy state and a parkinsonian/diseased state, and are also given in Table \ref{synaptic.weights}. Moreover, for simplicity, equal time constants and time delays are considered (i.e. $\tau_S = \tau_G = 6$ (ms), $\Delta t = \Delta t_{GS} = \Delta t_{SG} = \Delta t_{GG}$ (ms)). { In our further investigation of this application, we will also assume equal transmission delays, although our theoretical setup is otherwise more general (for example, it transcends the need to linearize the activation functions, which was a building step in the original analysis in the reference).}

We consider a time rescaling $t\mapsto \tau_S t$, the rescaled state variables $u(t) = STN(\tau_S t)$, $v(t) = GP(\tau_S t)$, and the parameters $a=w_{SS}$, $b=-w_{GS}$, $c=w_{SG}$, $d=-w_{GG}$, $\theta_u = w_{CS}Ctx$, $\theta_v=-w_{XG}Str$. The activation functions are as in \cite{holgado2010}: \[f_1(x) = F_S(x) =\ds \frac{M_S B_S}{B_S+exp(-4x/M_S)(M_S - B_S)}\quad\text{and}\quad f_2(x) = F_G(x) =\ds \frac{M_G B_G}{B_G+exp(-4x/M_G)(M_G - B_G)}\]
The state variables $u$ and $v$ verify a system of the type \eqref{sys.wilson.cowan.dd} with an average time delay $\tau=\Delta t/\tau_S$ (taking into account that in the original model from \cite{holgado2010}, discrete time delays are considered).

\setlength{\arrayrulewidth}{0.5mm}
\setlength{\tabcolsep}{15pt}
\begin{table}[http]
    \centering
\begin{tabular}{|c|c|c|c|c|}
\hline
& & & &\\
    Parameter & Healthy state & Diseased state & Other Parameters & Value \\
& & & &\\
\hline
    $w_{SS}$ & $0$ & $0$ & $Ctx$ & $27$ spk/s\\
\hline
    $w_{SG}$ & $19.0$ & $20.0$ & $Str$ & $2$ spk/s\\
\hline
    $w_{GS}$ & $1.12$ & $10.7$ & $M_S$ & $300$ spk/s\\
\hline
    $w_{GG}$ & $6.60$ & $12.3$ & $B_S$ & $17$ spk/s\\
\hline
    $w_{CS}$ & $2.42$ & $9.2$ & $M_G$ & $400$ spk/s\\
\hline
    $w_{XG}$ & $15.1$ & $139.4$ & $B_G$ & $75$ spk/s\\
\hline
\end{tabular}
\caption{Synaptic connection weight parameters and other parameters estimated in \cite{holgado2010}}
\label{synaptic.weights}
\end{table}

{ For the healthy state parameters given in Table \ref{synaptic.weights}, the characteristic parameter values are $(\alpha_H,\beta_H)=(-3.06805,2.24878)$. As in this case, $\alpha_H^2>4\beta_H$, a Hopf bifurcation may occur only in the conditions of Theorem \ref{thm.stab.dom.bounded}, when $(\alpha_H,\beta_h)$ belong to the line segment $(l_\tau)$. In the Dirac kernel case, the critical value of $\tau=\Delta t/\tau_S$ responsible for the occurrence of a Hopf bifurcation is computed as described in the previous section: $\tau_{0,H}^*=1.367$, and the frequency of oscillations occurring in system \eqref{sys.holgado} near the bifurcation point is $\omega^*/(2\pi\tau_S\tau^*)=41.5133~Hz$, which agrees with the numerical simulations presented in Fig. \ref{fig.simulations.dirac}. On the other hand, in the case of a strong Gamma kernel, it may be easily checked that $(\alpha_H,\beta_H)$ belongs to the "smallest" stability region $S_2(\alpha,\beta)$ indicated in Example \ref{ex.strong.gamma}, and therefore the equilibrium of system \eqref{sys.holgado} is asymptotically stable for any $\tau$ (see Fig. \ref{fig.simulations.strong}). Likewise, in the case of a weak Gamma kernel, $(\alpha_H,\beta_H)$ belongs to the "smallest" stability region $S_1(\alpha,\beta)$ indicated in Example \ref{ex.weak.gamma}, and hence the equilibrium of system \eqref{sys.holgado} is asymptotically stable for any $\tau$, as shown in Fig. \ref{fig.simulations.weak}.

For the pathological state parameters given in Table \ref{synaptic.weights}, the characteristic parameter values are $(\alpha_P,\beta_P)=(-2.53928,11.2213)$. This time, as $\alpha_P^2<4\beta_P$, a Hopf bifurcation occurs only when $(\alpha_P,\beta_P)\in (\gamma_\tau)$. To find the critical value of $\tau$ for a particular delay kernel, we numerically solve for smallest positive $\omega$ and $\tau$ the system which provides the parametric equations of the curve $(\gamma_\tau)$, as in Theorem \ref{thm.stab.dom.bounded} or Theorem \ref{thm.stab.dom.unbounded}, and we obtain the following values for the considered kernels: 
\begin{itemize}
\item Dirac: $\tau_{0,P}^*=0.216411$ and corresponding frequency of oscillations in \eqref{sys.holgado}: $84.8049~Hz$;
\item weak Gamma: 
$\tau_{1,P}^*=0.619418$ and corresponding frequency of oscillations in \eqref{sys.holgado}: $50.7756~Hz$;
\item strong Gamma: $\tau_{2,P}^*=0.283222$ and corresponding frequency of oscillations in \eqref{sys.holgado}: $72.5652~Hz$;
\end{itemize}
Again, the frequencies have been computed according to the formula $\omega^*/(2\pi\tau_S\tau^*)$, due to the time rescaling which has been used. The theoretically predicted frequencies correspond to the numerically computed values, as seen in Figs. \ref{fig.simulations.dirac}, \ref{fig.simulations.strong} and \ref{fig.simulations.weak}. 
}

We simulated the behaviour of the system \eqref{sys.holgado}, with the above chosen parameters and activation functions. The behaviors found illustrate in a clinical context our theoretical results obtained in this paper (e.g. Theorems \ref{thm.stab.dom.bounded} and \ref{thm.stab.dom.unbounded}) and expand on the modeling results in the reference \cite{holgado2010}. 

As a first step, Figures~\ref{fig.simulations.dirac}, \ref{fig.simulations.strong} and \ref{fig.simulations.weak} illustrate the behavior of each of the two state variables $u$ and $v$ for different delay kernels: for discrete delays (Figure~\ref{fig.simulations.dirac}) for strong Gamma delays (Figure~\ref{fig.simulations.strong}) and for weak Gamma delays (Figure~\ref{fig.simulations.weak}). In each figure, the left panels represent the behavior of the system with coupling parameters within a healthy functioning regime, while the right panels illustrate the behavior for coupling parameters within the PD range (as shown in Table~\ref{synaptic.weights}). The different rows of the figures illustrate different values of the delay $\Delta t/\tau_S \in [0,2]$, interval compatible with the physiological range (as per the table in the reference~\cite{holgado2010}). Notice that, in the PD case (right panels), sustained oscillations in the Beta range can be readily triggered by delay values within this range ($\Delta t/\tau_S \sim 0.21$ in the case of the discrete kernel, $\Delta t/\tau_S \sim 0.28$ in the case of the strong Gamma kernel, $\Delta t/\tau_S \sim 0.61$ in the case of the weak Gamma kernel). Interestingly, while the oscillation amplitudes increase with the delay (top to bottom panels), the frequency is higher for shorter delays. 

In contrast, note that much higher delay values are required in order to trigger stable oscillations in the healthy regime (they appear at a critical value of $\Delta t/\tau_S \sim 1.36$ for the discrete kernel, and they never occur in the case of weak or strong Gamma kernel). This is in agreement with the idea that { periodic oscillations in the Beta range are greatly enhanced in PD}, as proposed in the original reference~\cite{holgado2010}. { Empirical studies support the link between increased Beta oscillatory activity and PD symptoms (such as  bradykinesia and rigidity)~\cite{little2014functional}, as discussed in more detail in the last section.}

\begin{figure}[http]
\centering
\begin{minipage}[c]{0.47\linewidth}
	\centering
	\includegraphics[width=\linewidth]{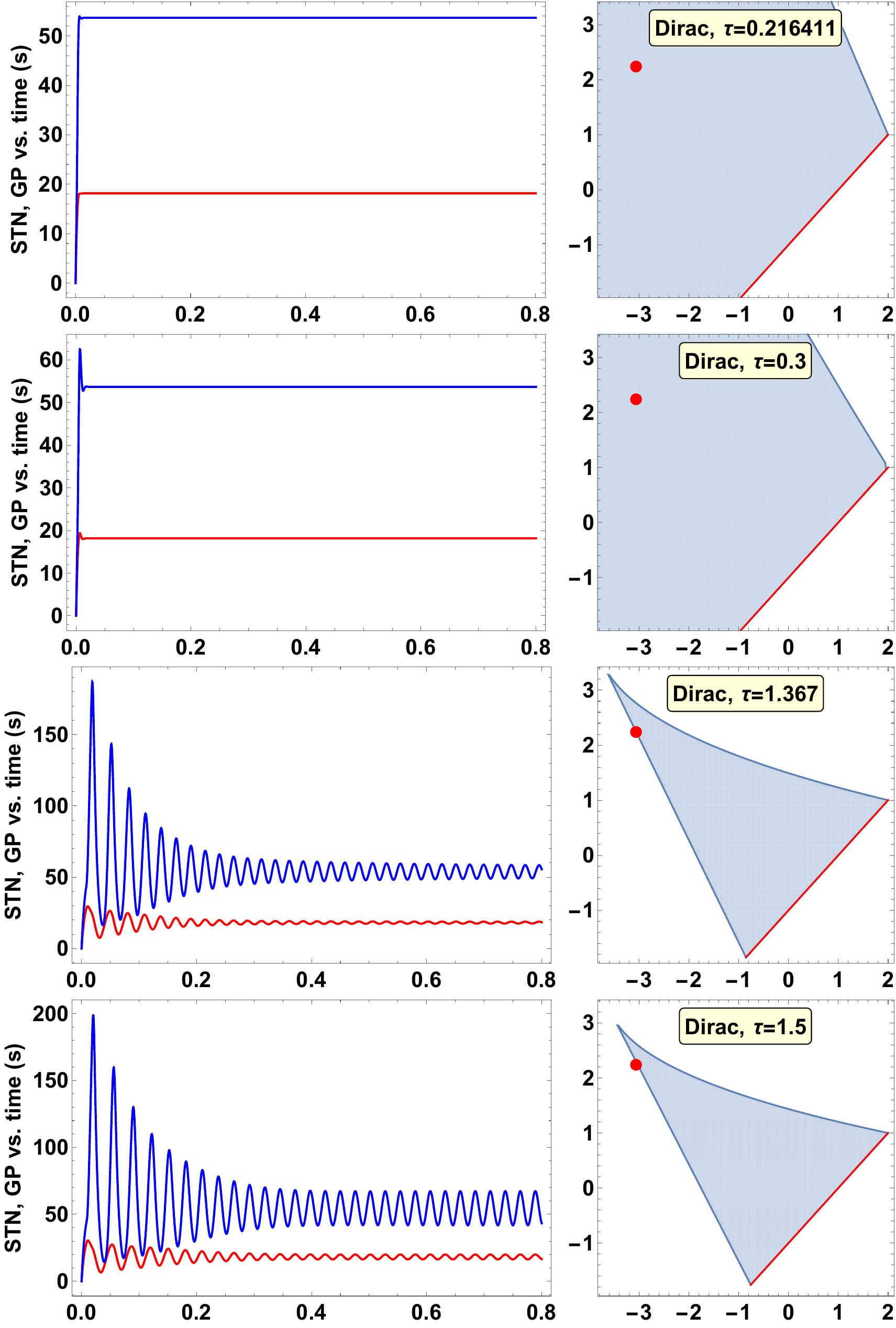}
\end{minipage}
\hspace*{0.02\linewidth}
\begin{minipage}[c]{0.47\linewidth}
\centering
  	\includegraphics[width=\linewidth]{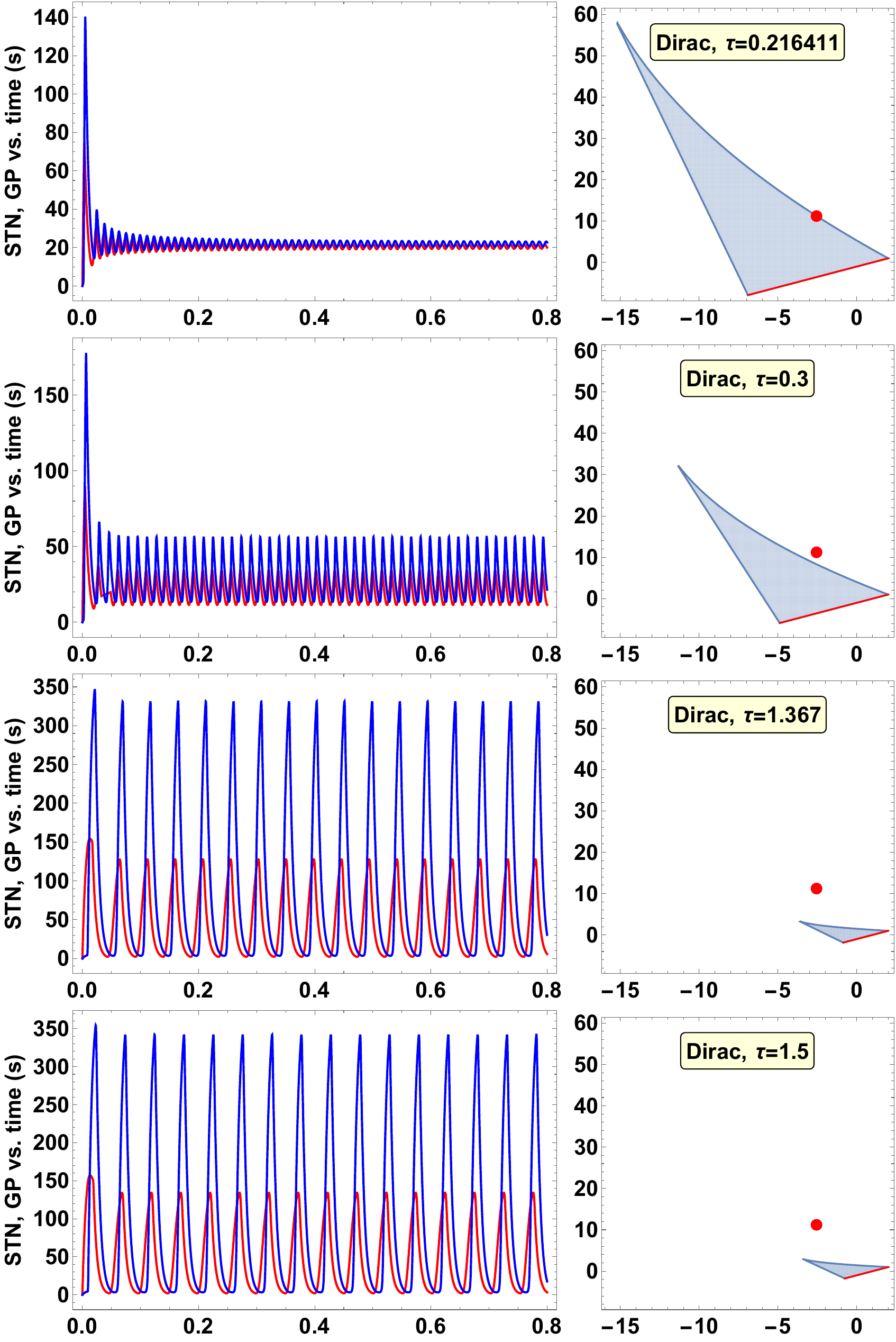}
\end{minipage}
	\caption{Evolution of state variables $STN(t)$ and $GP(t)$ of system \eqref{sys.holgado} with discrete time-delay with healthy state parameters (left) and diseased (parkinsonian) state parameters (right) as given in Table \ref{synaptic.weights} for the values of  $\tau=\Delta t/\tau_S \in\{\tau_{0,P}^*,0.3,\tau_{0,H}^*,1.4\}$ (top to bottom). The critical values of $\tau=\Delta t/\tau_S$ are: $\tau_{0,H}^*=1.367$ in the healthy state and  $\tau_{0,P}^*=0.216411$ in the diseased state. The position of the points $(\alpha_H,\beta_H)=(-3.06805,2.24878)$ and $(\alpha_P,\beta_P)=(-2.53928,11.2213)$ is indicated with respect to the corresponding stability region.}
	\label{fig.simulations.dirac}
\end{figure}

\begin{figure}[http]
\centering
\begin{minipage}[c]{0.47\linewidth}
	\centering
	\includegraphics[width=\linewidth]{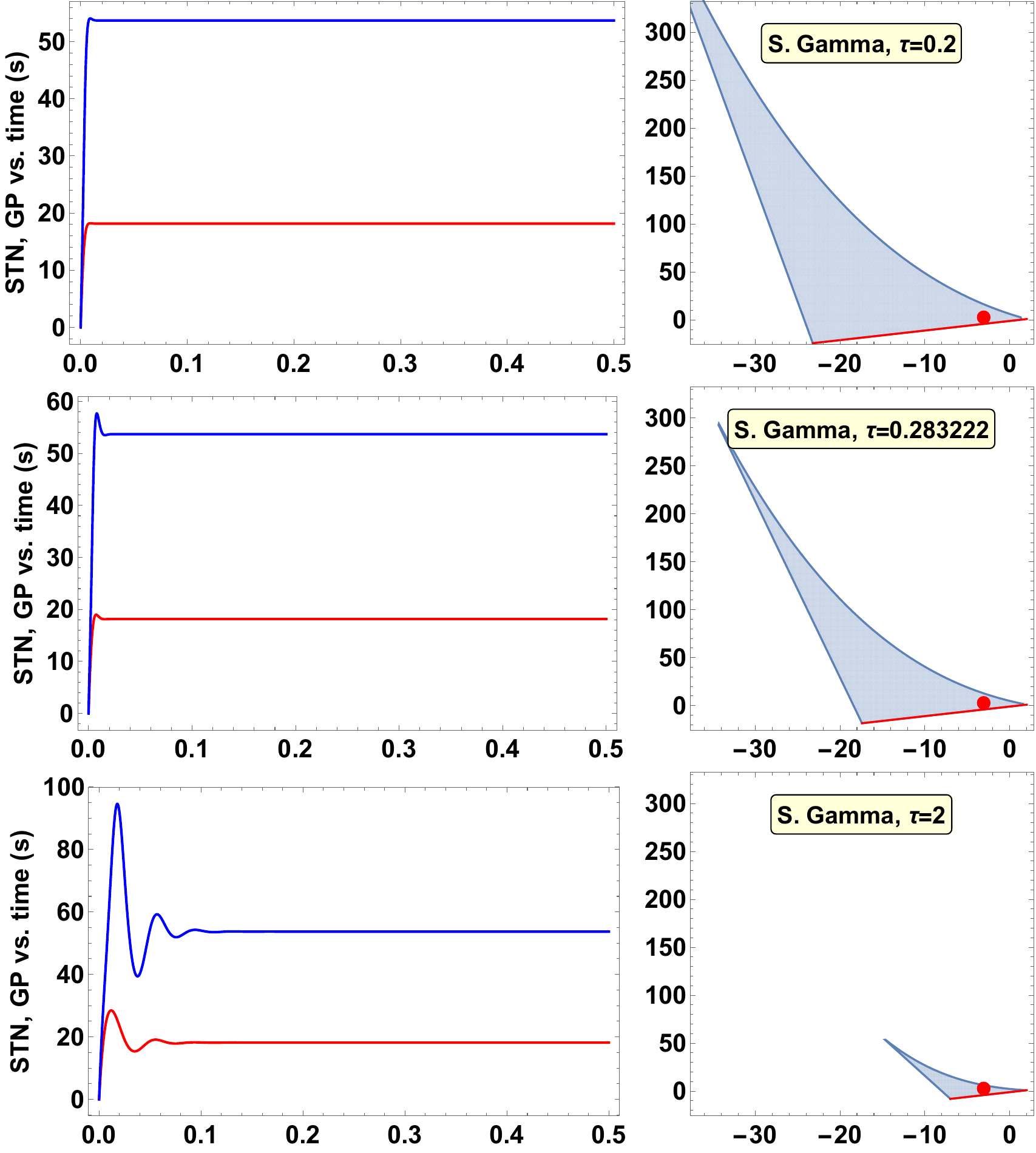}
\end{minipage}
\hspace*{0.02\linewidth}
\begin{minipage}[c]{0.48\linewidth}
\centering
  	\includegraphics[width=\linewidth]{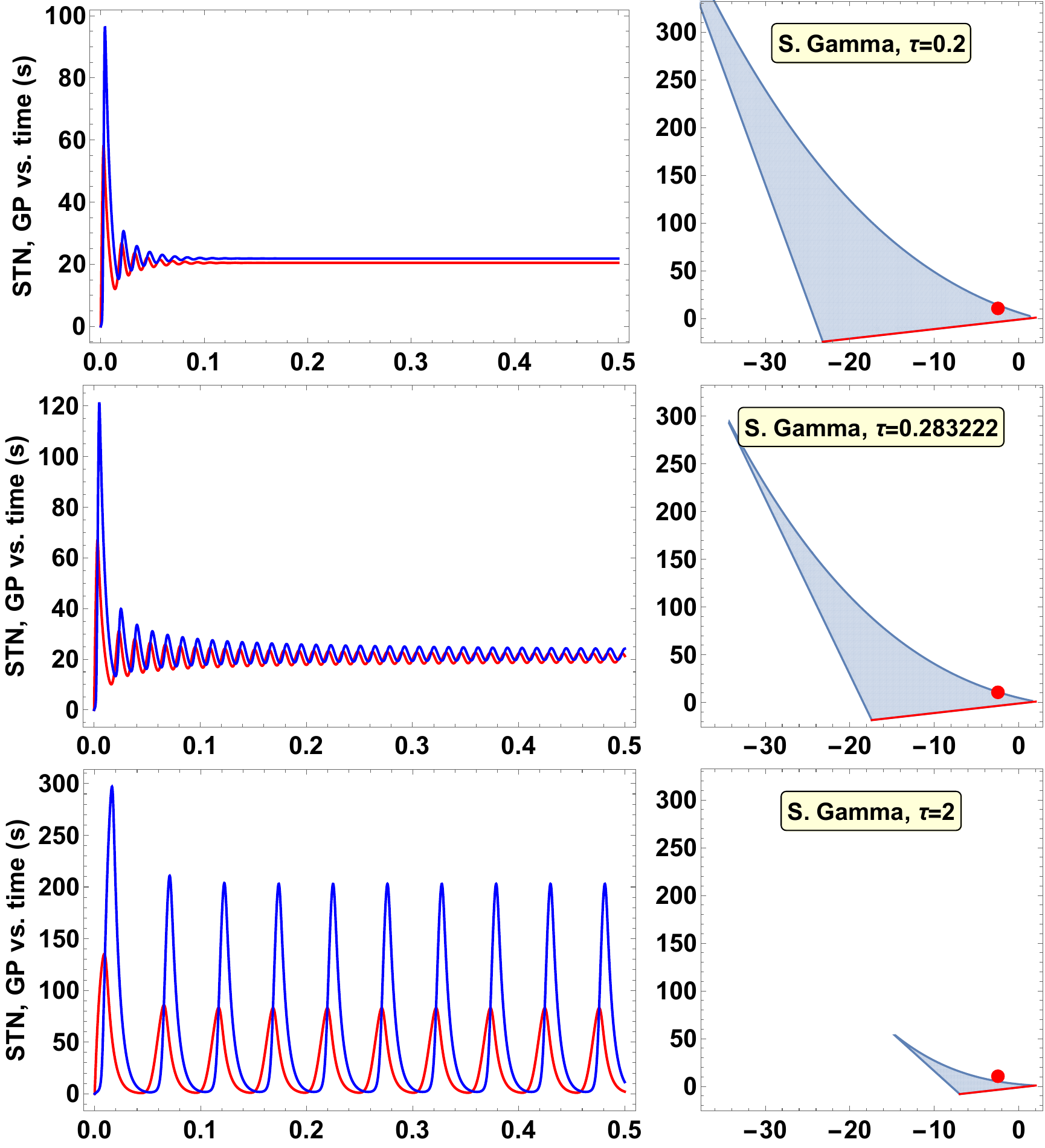}
\end{minipage}
	\caption{Evolution of state variables $STN(t)$ and $GP(t)$ of system \eqref{sys.holgado} with strong Gamma delay kernels with healthy state parameters (left) and diseased (parkinsonian) state parameters (right) as given in Table \ref{synaptic.weights} for the values of $\tau=\Delta t/\tau_S \in\{0.2,\tau_{2,P}^*,2\}$ (top to bottom). In the healthy case, the equilibrium is always asymptotically stable, while in the diseased case, the critical value of $\tau=\Delta t/\tau_S$ is $\tau_{2,P}^*=0.283222$. The position of the points $(\alpha_H,\beta_H)=(-3.06805,2.24878)$ and $(\alpha_P,\beta_P)=(-2.53928,11.2213)$ is indicated with respect to the corresponding stability region.	}
	\label{fig.simulations.strong}
\end{figure}

\begin{figure}[http]
\centering
\begin{minipage}[c]{0.47\linewidth}
	\centering
	\includegraphics[width=\linewidth]{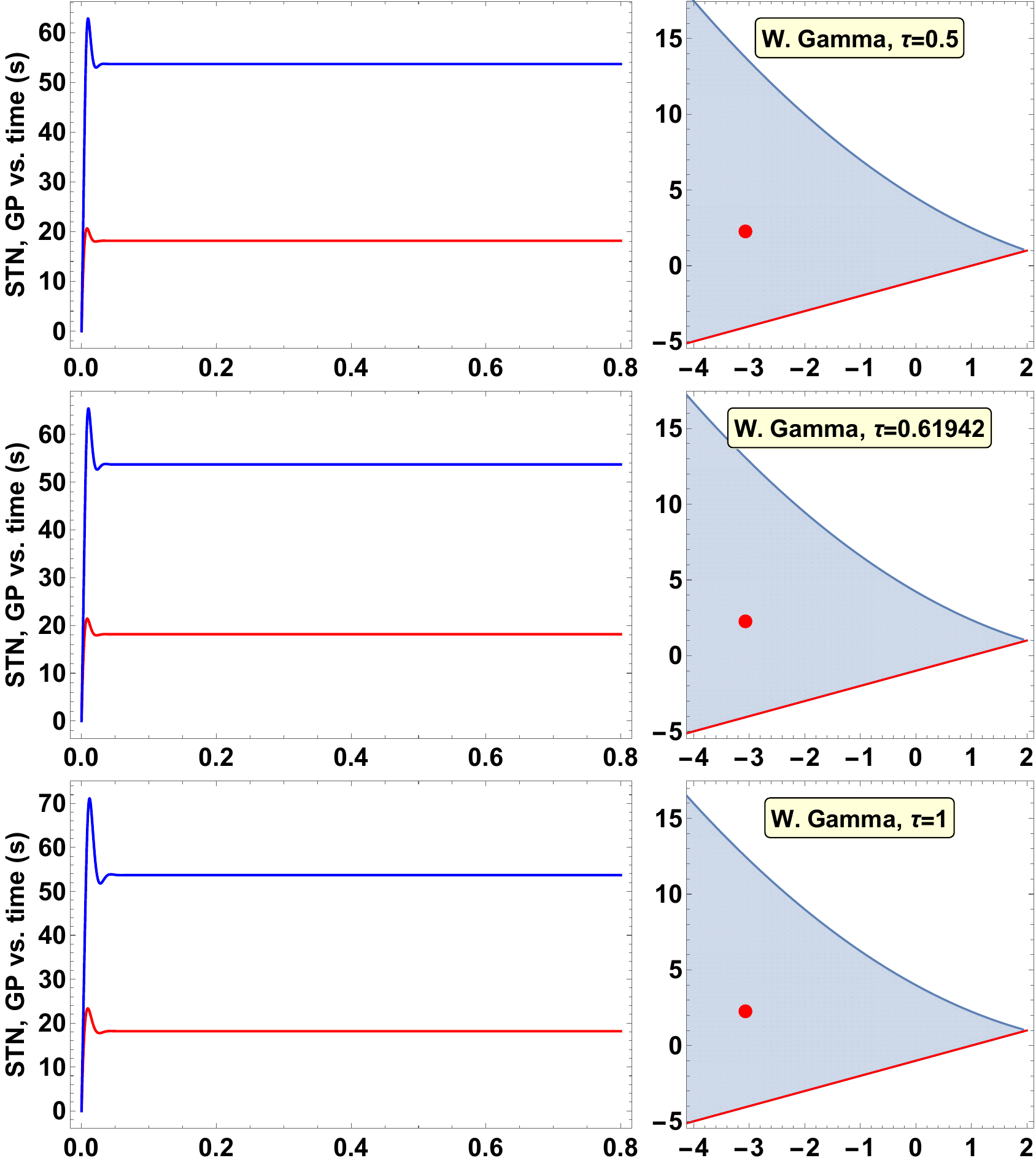}
\end{minipage}
\hspace*{0.02\linewidth}
\begin{minipage}[c]{0.48\linewidth}
\centering
  	\includegraphics[width=\linewidth]{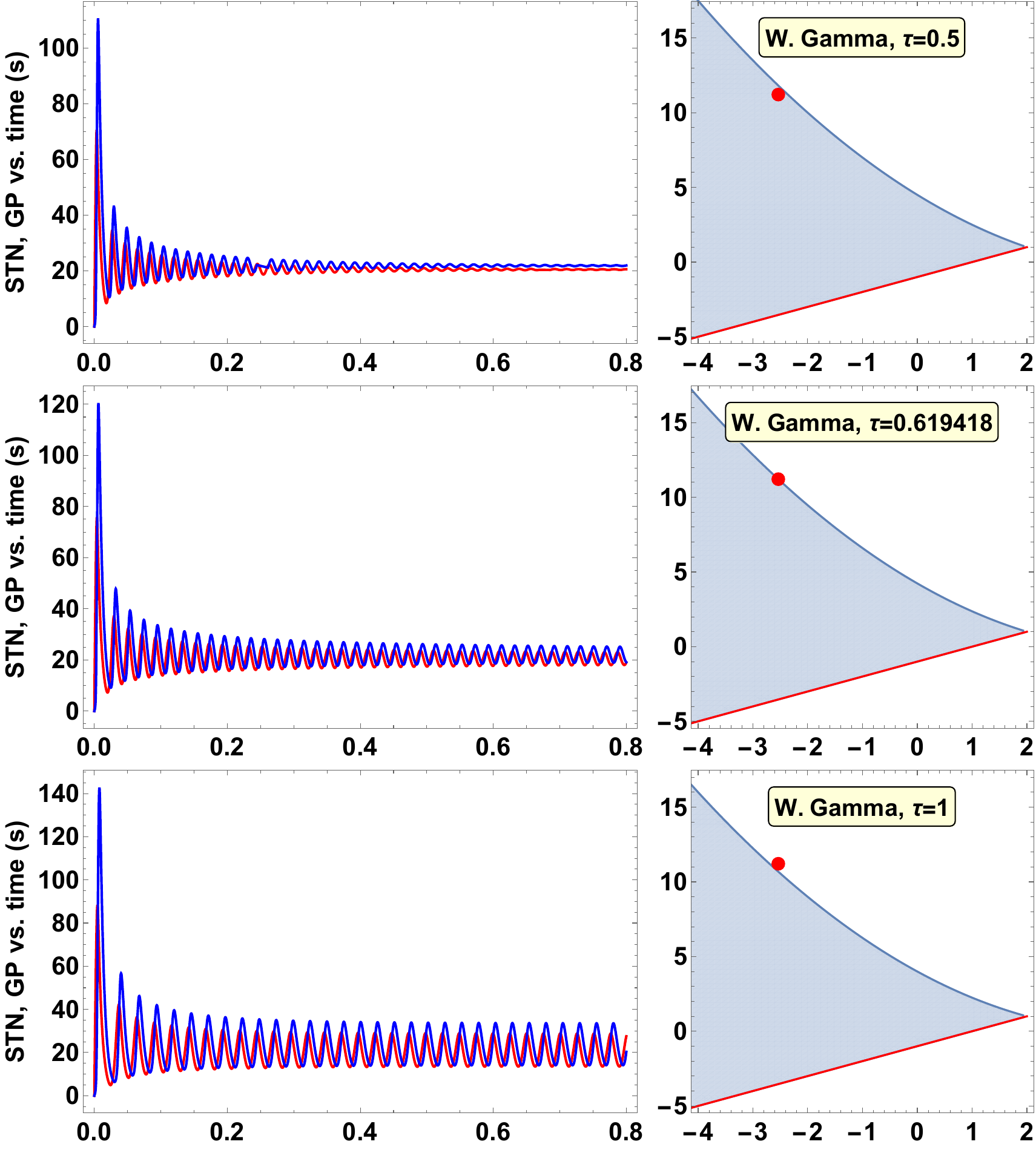}
\end{minipage}
	\caption{Evolution of state variables $u(t)$ and $v(t)$ of system \eqref{sys.wilson.cowan.dd} with weak Gamma delay kernels with healthy state parameters (left) and diseased (parkinsonian) state parameters (right) as given in Table \ref{synaptic.weights} for the values of $\tau=\Delta t/\tau_S \in\{0.5,\tau_{1,P}^*,1\}$ (top to bottom). In the healthy case, the equilibrium is always asymptotically stable, while in the diseased case, the critical value of $\tau=\Delta t/\tau_S$ is $\tau_{1,P}^*=0.619418$. The position of the points $(\alpha_H,\beta_H)=(-3.06805,2.24878)$ and $(\alpha_P,\beta_P)=(-2.53928,11.2213)$ is indicated with respect to the corresponding stability region.}
	\label{fig.simulations.weak}
\end{figure}

The differential propensity of the system for { oscillations} in the PD versus the healthy regime is further illustrated in Figures~\ref{fig:critical.ratio.dirac},~\ref{fig:critical.ratio.strong1}, \ref{fig:critical.ratio.strong2}, \ref{fig:critical.ratio.weak1} and~\ref{fig:critical.ratio.weak2}. These deliver a more comprehensive description of the subtle interplay (between coupling strengths, delay kernel and delay value) that orchestrates the triggering and maintenance of { oscillatory behavior.}

Figure~\ref{fig:critical.ratio.dirac} illustrates differences between the onset of { oscillations} in the PD system versus the healthy system, in the case of discrete delays. For each panel, the coupling parameters $w_{SS}$, $w_{GG}$, $w_{CS}$ and $w_{XG}$ were fixed (not shown): to the healthy state values in Table~\ref{synaptic.weights} (in the left panel) and to the PD values (in the right panel). Then, for each pair of cross-connectivity parameters $w_{SG}$ and $w_{GS}$ in the appropriate range, the critical value (i.e., the value corresponding to onset of oscillations) of $\Delta t/\tau_S$ was computed. The corresponding value was then plotted at each parameter point $(w_{SG},w_{GS})$ as an associated color in a blue to red color map (color bar shown on the right of each parameter plot). 

\color{black}

\begin{figure}[http]
\centering
\begin{minipage}[c]{0.48\linewidth}
	\centering
	\includegraphics[width=\linewidth]{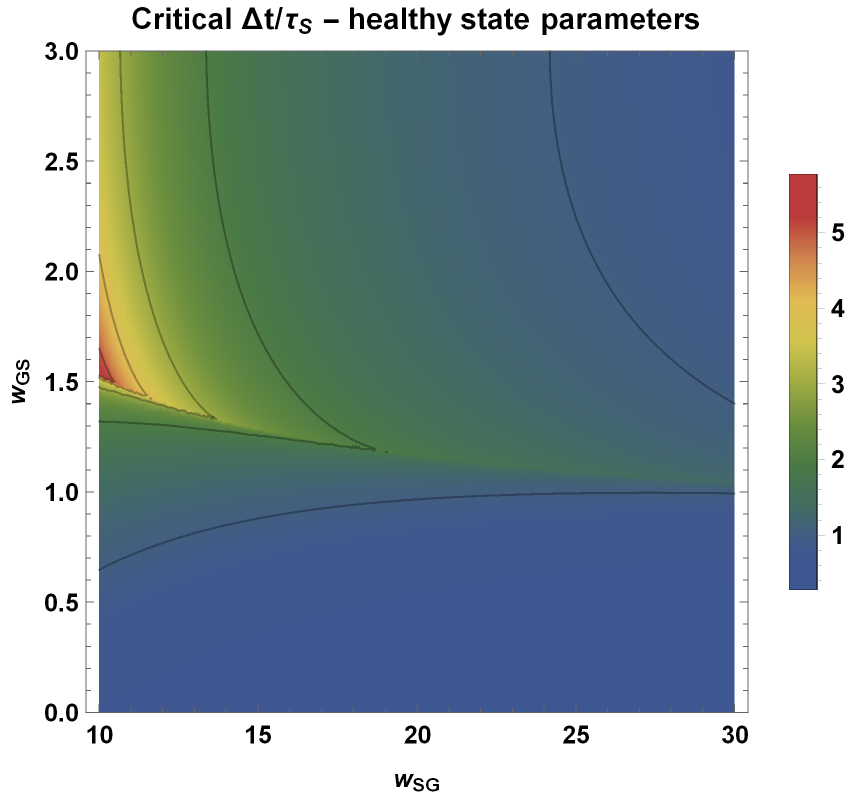}
\end{minipage}
\hspace*{0.02\linewidth}
\begin{minipage}[c]{0.48\linewidth}
\centering
  	\includegraphics[width=\linewidth]{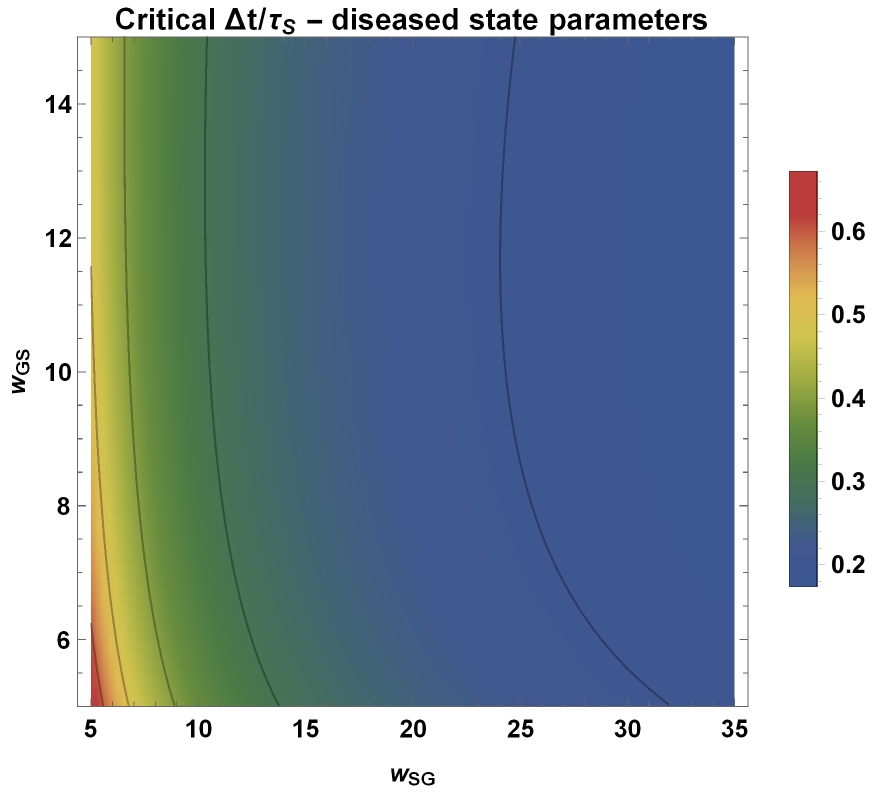}
\end{minipage}
    \caption{Discrete time-delay. Critical values of the $\Delta t/\tau_S$ ratio which mark the onset of oscillatory behavior for the two cases: healthy (left) and parkinsonian (right). All parameter values have been fixed to those given in Table 1, apart from $w_{SG}$ and $w_{GS}$.}
    \label{fig:critical.ratio.dirac}
\end{figure}

An immediate comparison between the two figure panels reiterates (in this broader parameter frame) the idea that, for Dirac distributed delays, sustained oscillations are accessible in the PD coupling range at significantly shorter delay times (consistent with the physiological values) than in the healthy coupling range. A more in depth analysis of the color patterns may suggest candidate mechanisms, based on synaptic remodeling, for onset and cessation of 
{ oscillatory} behavior in each case. For example, in healthy systems, a weaker GP to STN coupling is more likely to result in sustained { oscillations} (since it lowers the critical delay to values within the physiological range). In PD systems, operating with short delay times ($\Delta t/\tau_S = 0.2-0.3$), strengthening of the STN to GP synaptic coupling may readily trigger sustained { oscillations} in PD patients, while weakening it can lead to ceasing the { oscillatory} behavior. This suggests the importance of having access to information not only on the synaptic coupling profile, but also on knowledge of the type of delay range and distribution involved in the neural integration, in the circuit in question.

Continuing along with this thought, the rest of this section revisits, in the context of this application, the importance of the subtle, but crucial differences in dynamics induced by different profiles in the delay distribution. Figures~\ref{fig:critical.ratio.strong1} and ~\ref{fig:critical.ratio.strong2} illustrate and compare the same exact coupling ranges as in Figure~\ref{fig:critical.ratio.dirac} (for the healthy state on the left, and for the PD state on the right), for the case of the strong Gamma kernel. Notice that, in the pathological case, the critical values of $\Delta t/\tau_S$ are sightly higher throughout the parameter plane than the similar values in the case of the Dirac kernel. However, the candidate mechanisms proposed for the trigger and stop of { oscillations} in the case of the Dirac kernel may equally apply. 

On the other hand, by contrast with the Dirac case, { oscillations are} no longer possible in the healthy range of coupling parameters in the case of the strong Gamma kernel. Theoretically, this is because the healthy coupling range (as shown in the left hand panel of Figure~\ref{fig:critical.ratio.dirac} and re-expressed as the red subset in the $(\alpha,\beta)$ domain, in Figure~\ref{fig:critical.ratio.strong1}) is fully contained in this case in the stability region of the system (shown as a grey shaded area in the $(\alpha,\beta)$ domain in Figure~\ref{fig:critical.ratio.strong1}), as computed in Theorem 2.7. Thus, previously shown in Figure 6a, any tendency towards { oscillations} is quickly suppressed, and the system converges to a steady firing regime.

In line with our previous analysis, { stable oscillations are} gated by even longer delay values when using weak Gamma distributed delays. Indeed, Figures~\ref{fig.simulations.weak},~\ref{fig:critical.ratio.weak1} and~\ref{fig:critical.ratio.weak2} illustrate the behavior of the system using a weak Gamma kernel, with the same fixed parameters as for the Dirac and strong Gamma kernels. The healthy coupling range remains a subset of the stability domain (as shown in Figure~\ref{fig:critical.ratio.weak1}), hence { stable oscillations are} not possible as a long-term outcome in this scenario (Figure~\ref{fig.simulations.weak}a). For the pathological coupling range, oscillations are possible for values of $\Delta t/\tau_S$ larger than $\sim 0.5$, but only if both the STN-GP and GP-STN synaptic pathways are strong enough (Figure~\ref{fig:critical.ratio.weak2} shows that oscillations are possible in this case only in the upper right corner of the $(w_{SG},w_{GS})$ parameter region).

\begin{figure}[http]
\centering
\begin{minipage}[t]{0.48\linewidth}
	\centering
	\includegraphics[width=0.88\linewidth]{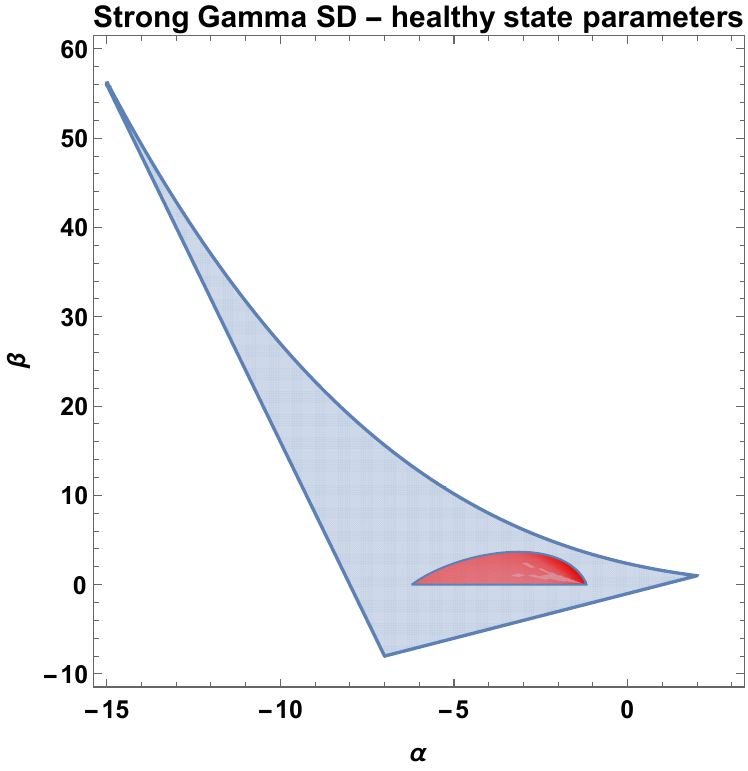}
	\caption{Strong Gamma delay kernel. Asymptotic stability for any $\tau>0$, for any values of the parameters $w_{GS}\in(0,20)$, $w_{SG}\in(0,30)$, when the other parameters and fixed at the healthy values given in Table 1.}
	\label{fig:critical.ratio.strong1}
\end{minipage}
\hspace*{0.02\linewidth}
\begin{minipage}[t]{0.48\linewidth}
\centering
  	\includegraphics[width=\linewidth]{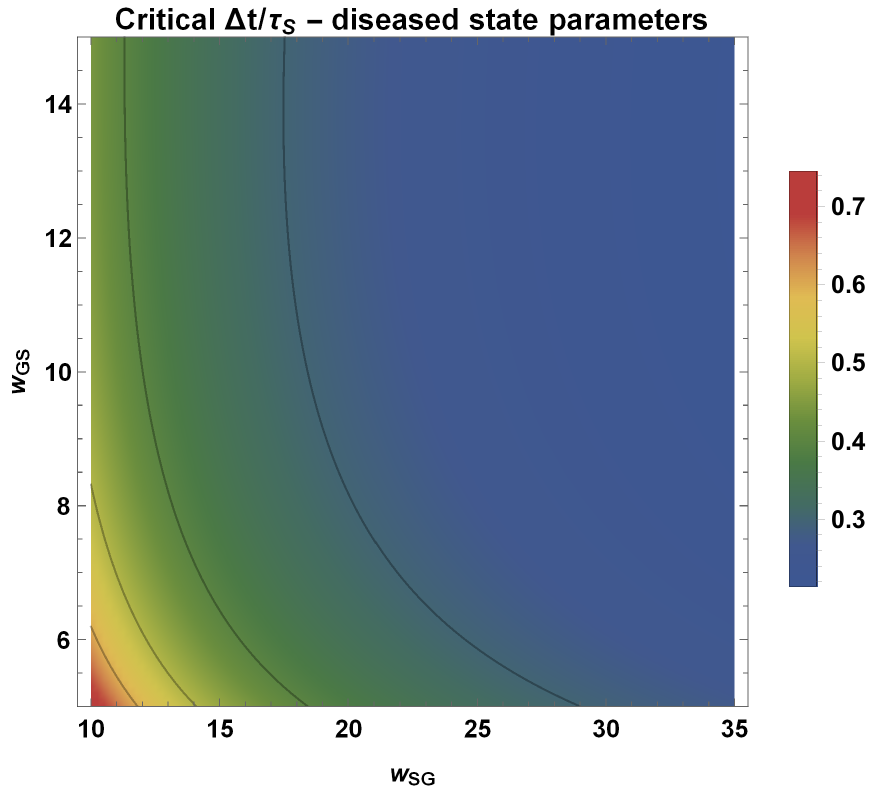}
  	 \caption{Strong Gamma delay kernel. Critical values of the $\Delta t/\tau_S$ ratio which mark the onset of oscillatory behavior for the parkinsonian case (with parameter values given by Table 1.)}
    \label{fig:critical.ratio.strong2}
\end{minipage}
\end{figure}

\begin{figure}[http]
\centering
\begin{minipage}[t]{0.48\linewidth}
	\centering
	\includegraphics[width=0.88\linewidth]{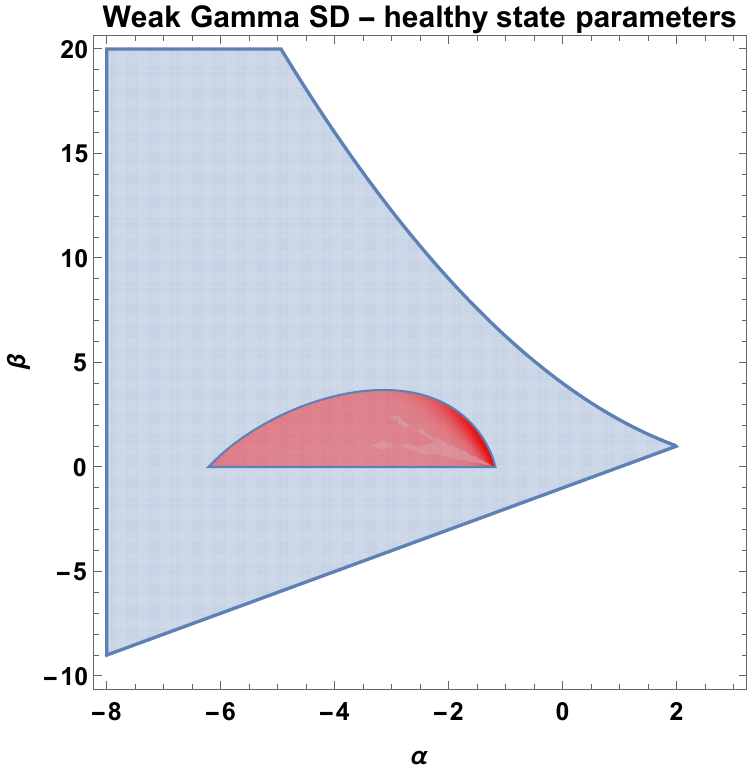}
	\caption{Weak Gamma delay kernel. Asymptotic stability for any $\tau>0$, for any values of the parameters $w_{GS}\in(0,20)$, $w_{SG}\in(0,30)$, when the other parameters and fixed at the healthy values given in Table 1.}
	\label{fig:critical.ratio.weak1}
\end{minipage}
\hspace*{0.02\linewidth}
\begin{minipage}[t]{0.48\linewidth}
\centering
  	\includegraphics[width=\linewidth]{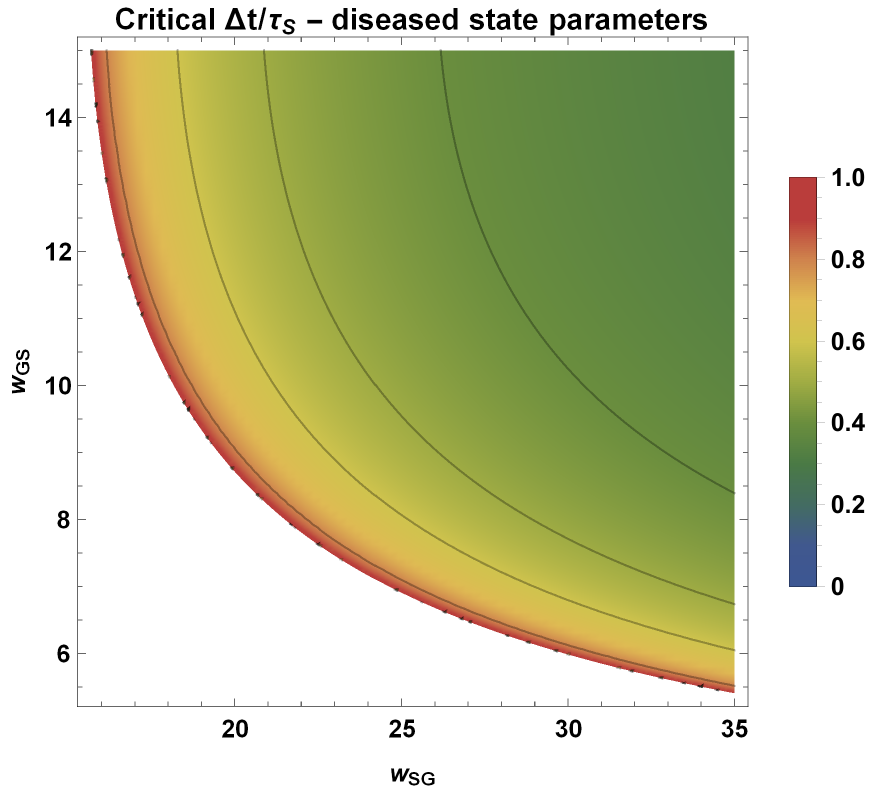}
  	 \caption{Weak Gamma delay kernel. Critical values of the $\Delta t/\tau_S$ ratio which mark the onset of oscillatory behavior for the parkinsonian case (with parameter values given by Table 1.)}
    \label{fig:critical.ratio.weak2}
\end{minipage}
\end{figure}

A further comparison between { oscillations} in the case of Dirac, weak and strong Gamma distributed delays is shown in Figure~\ref{fig.sim.strong.gamma.park.pp}. For the PD coupling range, each row of panels captures the emergence of oscillations when increasing the delay $\Delta t/\tau_S$ past the critical value: for the Dirac kernel (top row), for the weak Gamma kernel (middle row), and for the strong Gamma kernel (bottom row). As previously discussed, the critical delay for the Dirac distribution ($\Delta t/\tau_S \sim 0.21$) is slightly lower than that for the strong Gamma distribution ($\Delta t/\tau_S \sim 0.28$). In addition, the figure confirms that the critical value for the weak Gamma distribution is significantly larger than both ($\Delta t/\tau_S \sim 0.61$), as one would have expected from the theoretical results illustrated in Figure~\ref{fig.stab.dom}. The significance of these differences will be further discussed in the next section.

\color{black}

\begin{figure}[http]
	\centering
	\includegraphics[width=\linewidth]{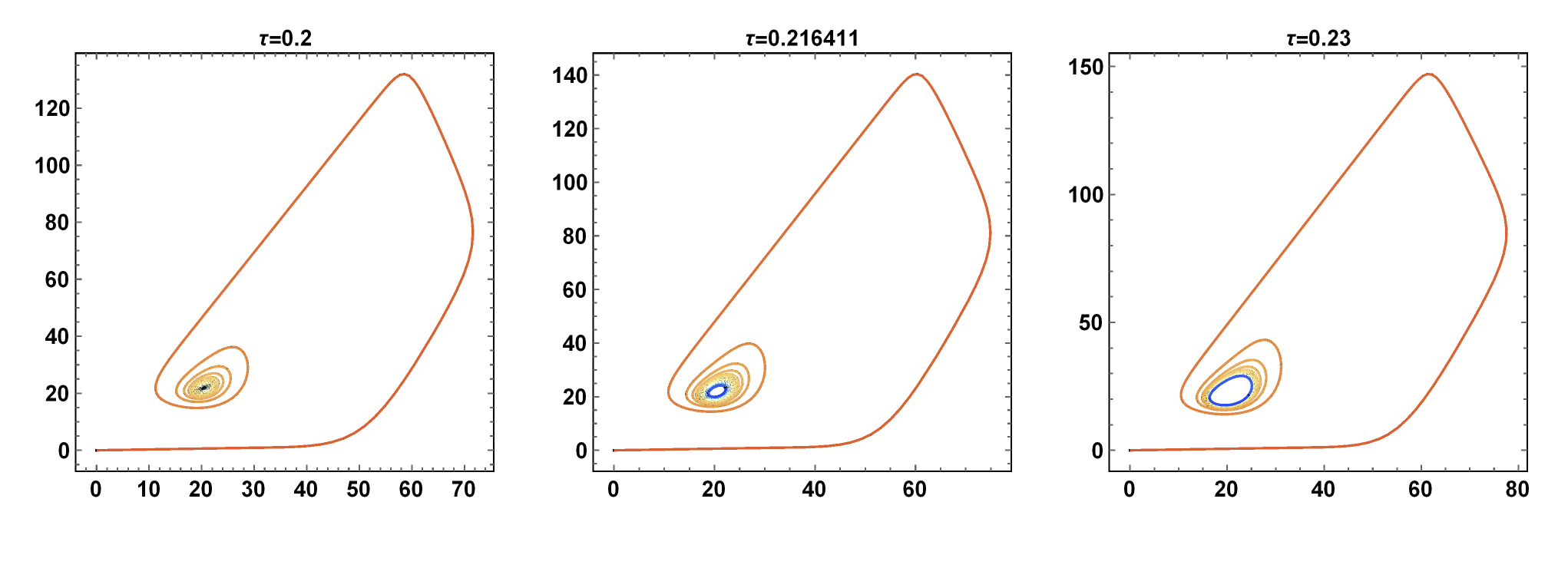}
	\includegraphics[width=\linewidth]{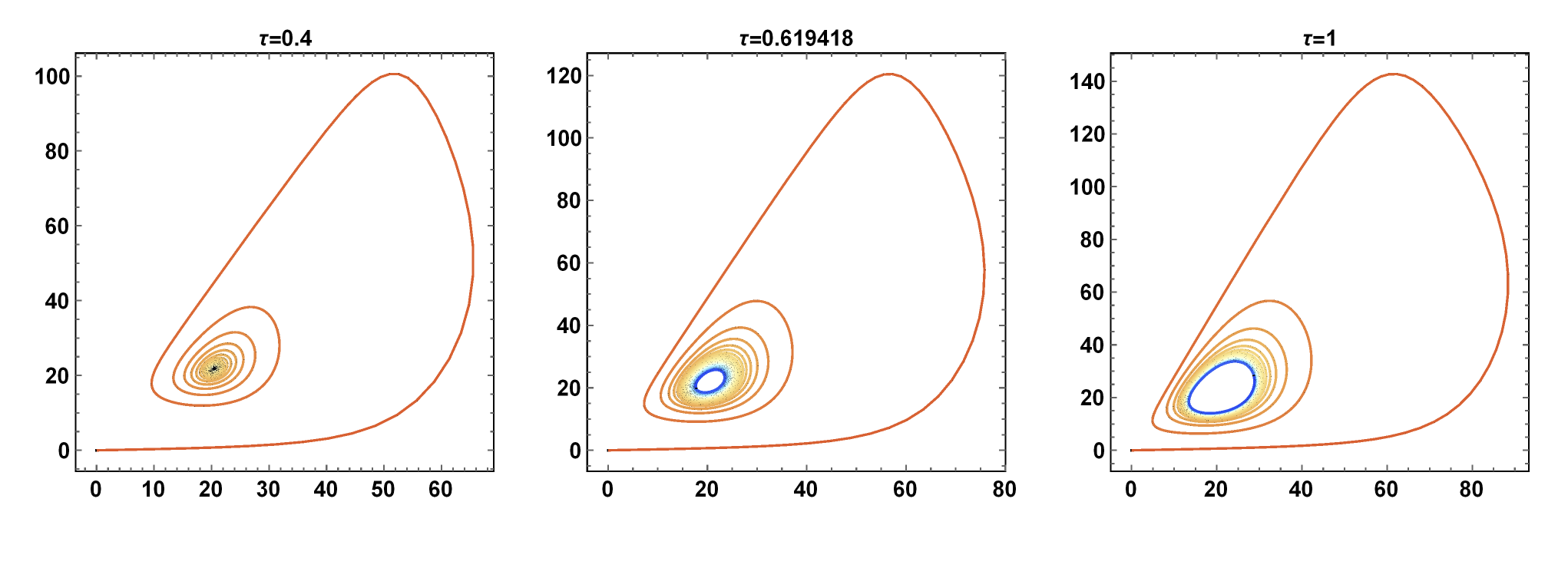}
	\includegraphics[width=\linewidth]{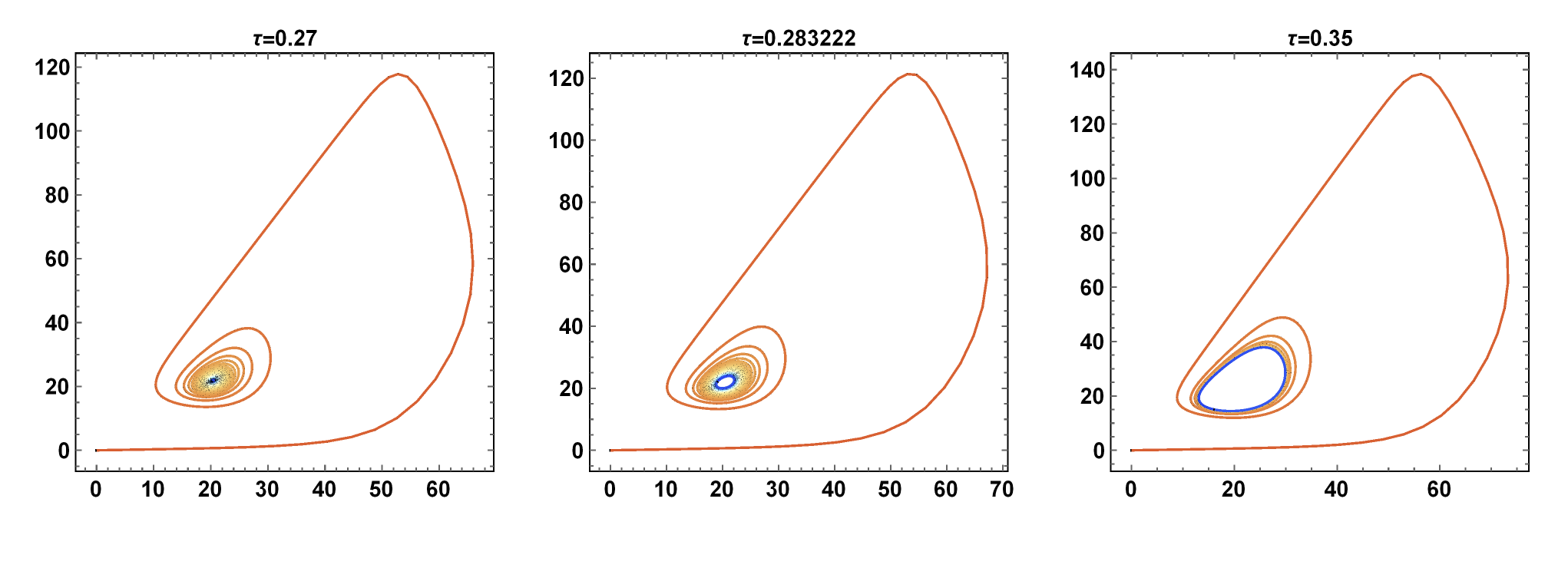}
	\caption{Phase plane trajectories for system \eqref{sys.holgado} with diseased state parameters, considering a Dirac kernel (first row, $\tau_{0,P}^*= 0.216411$), a weak Gamma kernel (second row, $\tau_{1,P}^*= 0.619418$) and a strong Gamma kernel (third row, $\tau_{2,P}^*=0.283222$), obtained for different values of $\tau=\Delta t/\tau_S$ around the Hopf bifurcation values.}
	\label{fig.sim.strong.gamma.park.pp}
\end{figure}

\section{Discussion}
\label{discussion}
We have accomplished a local stability and bifurcation analysis of a generalized version of the Wilson-Cowan model of excitatory and inhibitory interactions in localized neuronal populations, incorporating general distributed delays. Essential differences have been pointed out for different scenarios involving diverse delay kernels,  (ranging from simple periodic to quasi-periodic and aperiodic behaviors). This emphasizes the potentially crucial importance of the kernel choice to the dynamic behavior of the system. 

Differentiating between specific behaviors is very important, since they represent different physiological rhythms observed empirically in neuronal E/I circuits. 

Recall that the system variables $u(t)$ and $v(t)$ represent meanfield firing activities in the excitatory and respectively inhibitory Wilson Cowan coupled populations. Then a stable equilibrium corresponds to convergence of the two populations' firing to almost constant rates, while a stable cycle represents meanfield oscillations between higher and lower firing rates in the two populations.

Oscillatory behavior is ubiquitous in the brain, as a way for neurons to efficiently assemble in a synchronized fashion, optimal for receiving and sending information~\cite{buzsaki2006rhythms}. Oscillations are typically viewed as a result of balancing excitation and inhibition, with either gaining ground at different times within an oscillation cycle. 

Brain rhythms cover frequency bands from lower than 1Hz to 600 Hz, with different rhythms associated with different brain areas, functions and states. Moreover, this correspondence is not one-to-one, in the sense that two different brain circuits may generate oscillations in the same frequency band (likely by using different underlying mechanisms). In turn, the function of a particular oscillatory pattern largely depends on the underlying brain structure. For example, Beta rhythms (13-30 Hz) in the motor system are associated with the absence of movement.

Rhythms alterations in a brain circuit outside of its healthy range have been found by empirical studies to be associated with mental or neurological disorders (for example, exacerbated synchronization is the mark of epilepsy, Gamma oscillations have been found to be diminished in schizophrenia, and Parkinson's Disease has been tied to enhanced Beta oscillations in motor areas).  

However, the relationship between pathology and oscillations is rather complex, going beyond simply locating increased amplitude oscillations within a specific frequency band. Transition paths between healthy and pathological rhythms are still under investigation for most disorders, and recent research has been striving to define and understand them. Along these lines, our bifurcation analyses may reveal potential mechanisms that govern transitions between oscillatory behaviors in PD.

\color{black}

With this in mind, our results show that a weak Gamma kernel may be a preferable mode of input integration in populations whose function is based primarily on sustaining an approximately constant rate of firing. A strong Gamma kernel may be used as an input integration profile when the basic function of the circuit requires transitions into simple, regular { oscillations}. Finally, the Dirac kernel (which can mathematically be viewed as a subtler, limit-like profile for distributed delays) may be the signature of finer oscillators, whose function is modulated by transitions between more complex { firing} patterns. For each type of kernel, our results suggest that more specific behaviors, and transitions between dynamic regimes can be further controlled by the connectivity strengths, and by the delay $\tau$.

The importance of the type and length of delay to functional brain rhythms, supported by our theoretical results and numerical simulations, was further illustrated in an application to a Wislon-Cowan type  model with distributed delays of the basal ganglia, inspired by the work of Holgado et. al.\cite{holgado2010}, {{investigating control mechanisms of Beta rhythms in Parkinson's Disease.}

In the context of normal executive control, Beta oscillations have been associated with promoting existing movement, at the cost of initiating a new motor set~\cite{little2014functional}. In this context, it does not seem surprising that oscillations in the basal ganglia are significantly more pronounced in PD. Deep brain stimulation studies suggest in fact a nontrivial relationship, in which beta oscillatory behavior contributes both quantitatively (higher amplitudes) but also causally to the motor impairment in PD. Hence understanding the timing and conditions that favor onset of Beta activity is crucial, since understanding this causal relationship may hold potential for therapeutic interventions.}

\color{black}

The Holgado model { places these questions in the framework of a dynamical system formed of two coupled state variables, representing activation of} the STN and GP, as they respond to outside stimulation, and to modulations from other brain areas (the cortex and the striatum). Based on empirical evidence, the authors defined in the reference different ranges of pairwise synaptic coupling strengths between these brain areas, one characteristic to healthy brain functioning, the other corresponding to the dynamic patterns found in Parkinson's Disease. They found a prevalence for stable oscillations in the PD regime of the model, corresponding to physiological { rhythms} in the Beta-range in the corresponding brain areas of PD patients. We advanced the analysis of this model by investigating the importance of the coupling range to the system's behavior, {\it in conjunction} with the size and the distribution of the delays. We found that, for all kernels, stable oscillations are accessible to the system in the PD coupling range at shorter delays than to the system in the healthy range (exactly how short, depends on the type of kernel used). 

Since the shorter delays are compatible with physiological values, lower critical delays can be seen as a gate to promoting the Beta-range { oscillations} in PD patients. Comparably short delay values are significantly more likely to place a system with healthy synaptic coupling within the stability domain (as shown in Figure~\ref{freq}a). This corresponds to what is known about healthy basal ganglia function, which { promotes desynchronization of Beta oscillations during} executive responses. In contrast, in a system within the PD functioning range, { Beta oscillations persist even as the execution is being carried out. This may contribute to} the motor abnormalities found in PD (such as bradykinesia and rigidity, which have been empirically associated with enhanced Beta rhythms). Altogether, our analysis of Holgado's model of basal ganglia suggests that it may be important to investigate neural mechanisms that have the potential to reshape a cell's memory profile when processing inputs, as a possible means to { mitigate executive function abnormalities} in PD patients.

\begin{figure}
    \centering
    \includegraphics[width=\textwidth]{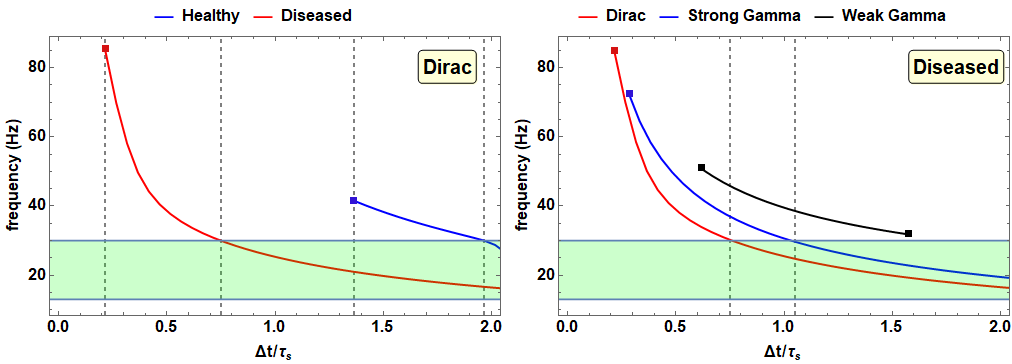}
    \caption{{Oscillation frequencies observed over an interval of values for $\Delta t/\tau_S$, in system \eqref{sys.holgado}. On each curve, the entry and exit form the oscillatory regime are marked with a dot. The Beta range is shown as a green shaded area. {\bf Left.} Comparison of oscillation frequencies between the system with a parameter set within the healthy range, versus one from the pathological range, when a Dirac kernel is considered. The diseased system enters the Beta range at a much lower $\Delta t/\tau_S \sim 0.74$ than the healthy system. {\bf Right.} Comparison of oscillation frequencies for the system with a parameter set within the pathological range, when different kernels are considered. The system with the Dirac kernel enters Beta oscillations at a lower value $\Delta t/\tau_S$ than the one with the strong Gamma kernel. The system with the weak Gamma kernel never exhibits Beta range frequencies within its window of oscillatory activity.}}
    \label{freq}
\end{figure}

We additionally observed particular patterns for the critical delay (onset of { oscillations}) { as well as for entering the Beta range activity, in the case of each kernel (Fingure~\ref{freq}b)}. We considered a system within the PD coupling regime, and with a distribution of delays within the physiological range, this combination placing it on a dynamic path of perpetual { oscillations}, even in absence of any additional stimulation. We explored the possibility that cessation of { oscillations} may be obtained by enhancing a target synaptic pathway. We noticed that, depending on the context, the perturbation may not necessarily have to be as large as to place the system in the healthy coupling regime. The adjustment would need to be just significant enough to move the system to a coupling profile where a shorter time delay is required in order to maintain { oscillatory behavior}; hence, with the current time delay, { oscillations} will be suppressed. Our future work aims to further study the potential physiological implementation of such mathematical solutions, and their potential towards development of clinical treatments. 

A significant limitation of the application in its current form is that brought by using the original, two-dimensional Wilson-Cowan coupling scheme. A two-dimensional system is a great place to start, since it is substantially easier to analyze and understand than higher dimensional, more complex systems. However, the neuronal circuit that governs executive function (and that has traditionally been studied in conjunction with conditions associated with executive { disturbances}, like Parkinson's Disease, or Obsessive Compulsive Disorder) crucially encompasses additional brain areas and connections, not represented in the Holgado model. For example, PD has been linked in both experimental and computational literature to modifications in the striatal-GP pathway~\cite{day2006selective,zhai2018striatal}, suggesting that the striatum should be included and studied as an additional state variable in the coupled system.

This underlines the general importance of extending the analysis of the Wilson-Cowan model with distributed delays to higher dimensional networks. We expect theoretical approaches to become more difficult, being affected by the fact that dynamic complexity increases vastly past dimension two, by the increased size of the coupling parameter space, and by the additional  contributions of the network architecture to the dynamics). However, the study of a more complete version of the importance of delays in the circuit controlling executive function could complement in novel and crucial ways existing work on modeling this circuit in absence of delays.


\bibliography{biblio}

\end{document}